\definecolor{darkblue}{rgb}{0,0,.45}
\definecolor{darkblue2}{rgb}{0,0,.65}
\definecolor{darkred}{rgb}{0.6,0,0}
\definecolor{darkred2}{rgb}{0.85,0,0}
\definecolor{darkgreen}{rgb}{0,0.5,0}
\definecolor{darkgreen2}{rgb}{0,0.75,0}
\definecolor{darkgray}{rgb}{0.15,0.15,0.15}
\def\electrons{\text{n}}
\def\holes{\text{p}}
\def\ions{\text{a}}
\theoremstyle{definition}
\newtheorem{theorem}{Theorem}
\newtheorem{lemma}[theorem]{Lemma}
\newtheorem{definition}[theorem]{Definition}
\newtheorem{proposition}[theorem]{Proposition}
\newtheorem{corollary}[theorem]{Corollary}
\newtheorem{remark}[theorem]{Remark}
\newtheorem{assumption}[theorem]{Assumption}
\newtheorem{example}[theorem]{Example}
\numberwithin{equation}{section}
\numberwithin{figure}{section}
\numberwithin{theorem}{section}
\renewcommand{\appendix}{%
  \par
  \setcounter{section}{0}%
  \setcounter{subsection}{0}%
  \gdef\thesection{\Alph{section}}%
}
\newcommand*\samethanks[1][\value{footnote}]{\footnotemark[#1]} 
\title{Existence of solutions and uniform bounds for the stationary semiconductor equations with generation and ionic carriers}
\author{
    Dilara Abdel\thanks{Weierstrass Institute for Applied Analysis and Stochastics (WIAS), 10117 Berlin, Germany}
    \and
    Alain Blaustein\thanks{Univ. Lille, CNRS, Inria, UMR 8524 - Laboratoire Paul Painlevé, F-59000 Lille, France}
    \and
    Claire Chainais-Hillairet\samethanks[2]
    \and
    Maxime Herda\samethanks[2]
    \and
    Julien Moatti\thanks{Bordeaux INP, Univ. Bordeaux, CNRS, Inria, IMB, UMR 5251,
    F-33400 Talence, France}
}
\date{}
\begin{document}

\maketitle

\begin{abstract}
We consider a stationary drift-diffusion system with ionic charge carriers and external generation of electron and hole charge carriers. This system arises, among other applications, in the context of semiconductor modeling for perovskite solar cells. Thanks to truncation techniques and iterative energy estimates, we show the existence and uniform upper and lower bounds on the solutions. The dependency of the bounds on the various parameters of the model is investigated numerically on physically relevant test cases.

\bigskip

\noindent\textbf{Mathematics Subject Classification (2020):} 35J66, 78A35, 35Q81, 65N08

\medskip

\noindent\textbf{Keywords:} drift-diffusion system, charge transport, perovskite solar cells, photogeneration, non-Boltzmann statistics, LBIC

\end{abstract}

\tableofcontents

\section{Introduction}

The mathematical modeling of charge transport in semiconductors is a classical subject, central both to mathematical analysis of partial differential equations (PDEs) and to the design of electronic and optoelectronic devices.
In conventional semiconductor devices, drift-diffusion models describe the motion of two mobile charge carriers—electrons and holes—coupled through the electrostatic potential~\cite{van1950theory, Mock83, gajewski86, Markowich86}.
However, in emerging technologies such as perovskite solar cells, additional physical effects must be incorporated into the modeling framework~\cite{calado2016evidence, courtier2018fast,  Abdel2021Model}.
A key example is ionic transport: in addition to electronic carriers, the migration of ionic vacancies, despite being slow, influences device operation \cite{calado2016evidence}.
Unlike standard models for solar cells, this extra species introduces nonlinearities associated with volume exclusion effects, which are not captured~\cite{Abdel2021Model} by the usual Boltzmann or Fermi-Dirac statistics~\cite{gajewski1989semiconductor} employed for electrons and holes.
These mobile ionic species are also crucial in other modern semiconductor applications such as memristor modeling \cite{jourdana2023three, Spetzler.2024, herda2025charge, abdel2025schottky}.
Another essential feature of solar cell modeling is photogeneration, which acts as an external source of carriers and must be considered alongside the intrinsic recombination-generation mechanisms.
Beyond photovoltaic energy conversion, charge carrier generation induced by external radiation is also relevant in light sensing with photodiodes \cite{Pisarenko2016, Hu2017} and non-destructive diagnostic techniques such as the laser beam induced current (LBIC)~\cite{BFI:93,FaIt:96, Yin2009LBIC, Redfern2005LBIC} or the lateral photovoltage scanning (LPS) method \cite{Farrell2021,Kayser2021}.
In all these cases, the generation strongly impacts charge carrier transport.

Motivated by these applications we are interested in this paper in the analysis of a generic charge transport system which includes a mobile ionic species and generation effects. There has been several recent works concerning the analysis of time dependent version of related systems \cite{jourdana2023three, herda2025charge, glitzky2025uniqueness, abdel2025analysis}. However, the key estimates of the previous papers are non-uniform in time so the analysis of the steady equations does not follow. Henceforth, the goal of this paper is to analyze the corresponding stationary system of equations, derive explicit bounds on solutions and investigate the influence of parameters on these bounds in physically relevant test cases.

\subsection{Stationary drift-diffusion equations with ionic carriers and generation}

  In a bounded domain $\Omega \subset \mathbb{R}^d$, $d \geq 1$, we consider a non-dimensionalized system of continuity equations
\begin{subequations} \label{eq:model-dimless}
    \begin{alignat}{2}
       -\nabla\cdot \left(n_{\electrons} \nabla v_{\electrons}\right) &= G-R(n_{\electrons}, n_{\holes}), \quad && x\in\Omega, \label{eq:model-cont-eq-n-dimless}\\
         -\nabla\cdot \left(n_{\holes} \nabla v_{\holes}\right) &= G-R(n_{\electrons}, n_{\holes}), \quad && x\in\Omega, \label{eq:model-cont-eq-p-dimless}
    \end{alignat}
    which are self-consistently coupled to the Poisson equation with rescaled Debye length $\lambda>0$ reading
    \begin{equation}\label{eq:model-poisson-dimless}
        - \lambda^2\Delta\psi =  z_{\electrons}n_{\electrons} + z_{\holes}n_{\holes} + z_{\ions}n_{\ions} + C , ~ \quad {x}\in{\Omega}.
    \end{equation}
    Here, $\psi$ denotes the electric potential and $n_\alpha\equiv n_\alpha(x)$, $\alpha \in \{ \electrons, \holes, \ions\}$ the charge carrier density, respectively for electrons, holes and ions.
    The subscript $\alpha = \ions$ is chosen for the index related to ions because in perovskite solar cell applications, this species corresponds usually to \emph{anionic} vacancies. The quantity $G\equiv G(x)$ denotes an external generation rate, $R(n_{\electrons}, n_{\holes})$ is the electron-hole recombination-generation, and $C\equiv C(x)$ is the given doping density.
    For each charge carrier species, $z_\alpha$ is the charge number with $z_\electrons = - z_\holes = -1$ and $z_\ions \in \mathbb{Z}$, and $v_\alpha\equiv v_\alpha(x)$ is related to the quasi-Fermi potential $z_\alpha^{-1}v_\alpha$ of the carrier species $\alpha$. Without distinction, we shall call $v_\alpha$ quasi-Fermi potentials in the following. The potentials and densities are related by the state equation
    \begin{equation} \label{eq:state-eq-stationary}
            n_\alpha = \mathcal{F}_\alpha \left(v_\alpha- z_\alpha \psi\right),\quad \text{and}\quad v_\alpha=\mathcal{F}_{\alpha}^{-1}(n_\alpha)+z_\alpha\psi,
    \end{equation}
    where $\mathcal{F}_\alpha$ is called statistics function.
    The statistics function and the recombination-generation term will be precised in Section~\ref{sec:mainresult}. The ionic vacancies are isolated in the semiconductor and at thermodynamic equilibrium in the present model meaning that $v_\ions$ is constant and determined by a given total mass $M_\ions>0$ through the relation
    \begin{equation}\label{eq:model-equilibrium ions}
    \nabla v_\ions = 0,\quad\text{and}\quad \int_\Omega n_\ions\,\mathrm{d} x = M_\ions.
    \end{equation}
    Thus, $n_{\ions}$ should be seen as a function of the electric potential only, making the Poisson equation \eqref{eq:model-poisson-dimless} nonlinear with respect to $\psi$ for given sources $n_\electrons$, $n_\holes$ and $C$.
\end{subequations}

The system is supplemented with Dirichlet boundary conditions on a part $\Gamma^D$ of the boundary $\Gamma=\partial \Omega$ and on Neumann boundary conditions on the complementary part $\Gamma^N$:
    \begin{subequations}\label{BC:model_bipolar_stationary}
    \begin{align}
    	& n_\electrons= n_\electrons^D, n_\holes= n_\holes^D, \psi=\psi^D \mbox{ on } \Gamma^D,\label {BC-dir:model_bipolar_stationary}\\
& \nabla n_\electrons\cdot \nu= \nabla n_\holes\cdot\nu= \nabla  \psi\cdot\nu=0\mbox{ on } \Gamma^N,\label {BC-neum:model_bipolar_stationary}
\end{align}
\end{subequations}
where $\nu$ denotes the exterior normal vector and $n_\electrons^D$, $n_\holes^D$ are given functions which will be defined in Assumption~\ref{main_ass}.

By combining the equations of \eqref{eq:model-dimless} the system can be recast into the drift-diffusion form
\begin{subequations} \label{eq:model-drift-diff}
    \begin{align}
       -\nabla\cdot \left( D_{\electrons}(n_{\electrons})\nabla n_{\electrons} - n_{\electrons}\nabla\psi\right) = G-R(n_{\electrons}, n_{\holes}) , && x\in\Omega, \label{eq:model-drift-diff-n}\\
         -\nabla\cdot \left( D_{\holes}(n_{\holes})\nabla n_{\holes} + n_{\holes}\nabla\psi\right) = G-R(n_{\electrons}, n_{\holes}), && x\in\Omega, \label{eq:model-drift-diff-p}\\
          - \lambda^2\Delta\psi = n_{\holes} - n_{\electrons} + z_{\ions}\mathcal{F}_\ions \left(v_\ions- z_\ions \psi\right) + C , &&{x}\in{\Omega}.\label{eq:model-drift-diff-Poisson}
    \end{align}
\end{subequations}
where the nonlinear diffusion coefficients are given by
\begin{equation}\label{def:D}
D_\alpha: x\in (0,+\infty)\mapsto  x({\mathcal F}_\alpha^{-1})'(x).
\end{equation}
Observe that this framework includes the case $z_\ions=0$ for which \eqref{eq:model-drift-diff} reduces to a stationary two-species system with generation and a linear Poisson equation.

The early seminal contributions to the analysis of two-species stationary drift-diffusion equations can be found in \cite{Mock83, Markowich86, groger1987steady}.  In~\cite{groger1987steady}, Gröger shows the existence of solutions for a stationary drift-diffusion system with general statistics and recombinations, but without generation term $G$. His analysis relies on truncations techniques that we shall use in the present analysis. However, the uniform bounds on the solution cannot be adapted in the presence of an external generation term $G$.
In the context of LPS method, the recent~\cite{Ali2025} is concerned in a two-species drift-diffusion model with generation. The difficulty to deal with arbitrary large generation term is mentioned in \cite[Section 1]{Ali2025}, and the analysis developed in the paper is instead based on a perturbative approach leading to a simplified system.

Elements of analysis of semiconductor model with external generation term can be found in the context of non-destructive control of semiconductor devices. In these applications, the optical generation term correspond to the energy induced by a laser beam. In~\cite{BFI:93,FaIt:96}, drift-diffusion systems with external generation and recombination terms are considered. The existence of solutions is proved using a fixed-point argument, and is valid under a smallness assumption on the external generation term. In~\cite{CuFa:96}, a similar system is studied in one-dimension, for which, under some simplification, analytical expressions of solutions are given.
 In \cite{FaWu:01} a drift-diffusion system coupled with an energy term (taking into account thermal effects) and external generation is considered. Using truncation techniques, $L^\infty$ bounds and a topological fixed-point argument, the existence of solutions is proved without any restriction on the intensity of the laser beam. However the analysis is limited to the two-species model with linear diffusion (\emph{i.e.} $D_\alpha = 1$ or equivalently $\mathcal{F}_\alpha = \exp$).

The present paper is concerned with the general three species model \eqref{eq:model-dimless}. The analysis includes physically realistic statistics functions leading to nonlinear diffusion in the drift-diffusion form \eqref{eq:model-drift-diff}, realistic recombination-generation mechanisms and external generation term $G$ without any size restriction. The latter point constitutes the major mathematical difficulty in the analysis.
The main contributions of the present paper are the proof of existence of solutions to the system \eqref{eq:model-dimless}, the derivation of uniform $L^\infty$ bounds  on solutions and bounds from below on the density, as well as the numerical investigation of the dependencies of those bounds on the various parameters of the equation in physically relevant settings.

\subsection{Main results}\label{sec:mainresult}

Let us start with some notation. With $|\cdot|$ we denote without distinction the Euclidean norm in $\mathbb{R}^n$,  the Lebesgue measure of a set in $\mathbb{R}^d$ or the $(d-1)$-dimensional Hausdorff surface measure of a set. The notation ${\mathbf 1}_A$ denotes the indicator function of the set $A$. For Lebesgue spaces $L^p(\Omega)$ we use the notation $\|\cdot\|_{L^p}$ for the canonical norm and $p' = p/(p-1)$ denotes the conjugate Lebesgue exponent. For the Sobolev space $H^1(\Omega)$ we use the notation $\|\cdot\|_{H^1}$ for the canonical norm. Along the proofs, $K$ denotes a positive constant which may change from line to line. Before stating the main theorem, let us gather the assumptions on the data.
\begin{assumption}\label{main_ass}
 The domain $\Omega\subset {\mathbb R}^d$ is bounded, connected and Lipschitz. The boundary $\partial \Omega$ is divided into disjoint Lipschitz parts $\Gamma^D$, $\Gamma^N$ such that $\overline{\Gamma^D\cup\Gamma^N}=\partial \Omega$ and $|\Gamma^D|>0$. The electron and hole densities $n_\electrons$ and $n_\holes$ are related to the electric and quasi-Fermi potential through \eqref{eq:state-eq-stationary} and for $\alpha \in\{\electrons,\holes\}$, we assume
    \begin{equation} \label{hyp:statistics-n-p}
        \left\{
        \begin{aligned}
            &\mathcal{F}_\alpha : \mathbb{R} \rightarrow (0,\infty)\text{ is a }  C^1\text{- diffeomorphism};\\[.5em]
            &0 < \mathcal{F}_\alpha'(\eta) \leq \mathcal{F}_\alpha(\eta) \leq \exp(\eta), \quad \forall\,\eta \in \mathbb{R} .
        \end{aligned}
        \right.
    \end{equation}
The ionic density $n_{\ions}$ is related to the electric and quasi-Fermi potentials through \eqref{eq:state-eq-stationary} and we assume
\begin{equation} \label{hyp:statistics-a}
	\left\{
	\begin{aligned}
    &S_\ions\coloneqq\sup\mathcal{F}_\ions< +\infty\,; \\[.2em]
		&\mathcal{F}_\ions : \mathbb{R} \rightarrow (0\,,\,S_\ions)\text{ is a }  C^1\text{- diffeomorphism};\\[.5em]
		&0 < \mathcal{F}_\ions'(\eta) \leq \mathcal{F}_\ions(\eta) \leq \exp(\eta), \quad\forall\,\eta \in \mathbb{R}.
	\end{aligned}
	\right.
\end{equation}
Observe that the boundedness of the image of $\mathcal{F}_{\ions}$ reflects that the density $n_\ions$ is \emph{a priori}  bounded.
 The Dirichlet boundary conditions are such that  $\psi^D, v_{\alpha}^D\in H^1(\Omega)\cap L^{\infty}(\Omega)$ and $n_\alpha^D$ is defined by  $n_\alpha^D={\mathcal F}_{\alpha}(v_{\alpha}^D-z_\alpha \psi^D)$ for $\alpha\in\{\electrons,\holes\}$. Observe that because of \eqref{hyp:statistics-n-p}, the boundary densities are bounded from above and below and we define
\begin{equation}\label{def:ND}
N^D= \max(\Vert  n_\electrons^D\Vert_{L^\infty}, \Vert  n_\holes^D\Vert_{L^\infty}).
\end{equation}
The total mass $M_\ions$ of the ions in \eqref{eq:model-equilibrium ions} is assumed to satisfy the compatibility condition
\begin{equation}\label{Mass:compatibility}
0\,< \,M_{\ions}\,< \, |\Omega|\,S_\ions\,.
\end{equation}
Concerning the external source terms, the doping profile and generation term are assumed to be such that there is $p\in[1,\infty]$ satisfying $p>d/2$ and
\begin{equation}\label{hyp:CG}
C\in L^{p}(\Omega)\,,\quad G\in L^p(\Omega)\,,\quad G(x)\geq 0\ \text{almost everywhere in } \Omega.
\end{equation}
The recombination-generation term takes the generic form
\begin{align}\label{eq:recombination-standard}
    R(n_{\electrons}, n_{\holes})= r(n_{\electrons}, n_{\holes})n_\electrons n_\holes (1-\exp(-{\mathcal F}_{\electrons}^{-1}(n_\electrons)-{\mathcal F}_{\holes}^{-1}(n_\holes))),
\end{align}
 with the rate satisfying for some constant $r_0\geq0$,
 \[
 0\leq r(n_{\electrons}, n_{\holes}) \leq r_0.
 \]
 The recombination term can be rewritten equivalently
 \begin{equation*}
 R(n_{\electrons}, n_{\holes})= \tilde{r}(n_{\electrons}, n_{\holes})(\exp({\mathcal F}_{\electrons}^{-1}(n_\electrons)+{\mathcal F}_{\holes}^{-1}(n_\holes))-1),
 \end{equation*}
 with the modified rate $\tilde{r}(x,y) = r(x,y) xy \exp(-{\mathcal F}_{\electrons}^{-1}(x)-{\mathcal F}_{\holes}^{-1}(y))$ still satisfying
 \begin{equation}\label{hyp:rate}
 0\leq \tilde{r}(n_{\electrons}, n_{\holes}) \leq r_0,
 \end{equation}
 in virtue of the hypothesis \eqref{hyp:statistics-n-p}. We will use this form of $R$ in the proofs.
\end{assumption}

Let us give examples of statistics functions and recombination-generation rates which satisfy Assumption~\ref{main_ass}.
\begin{example}[Statistics functions] Typically, for electrons and holes one considers either Boltzmann statistics
\[
\mathcal{F}(\eta) = e^\eta,
\]
which yields linear diffusion in \eqref{eq:model-drift-diff}, or Fermi-Dirac of order 1/2 statistics
\begin{align} \label{eq:FD12}
    {\mathcal F}(\eta) = \frac{2}{\sqrt{\pi}}\int_0^\infty\frac{\xi^{1/2}}{\exp(\xi-\eta) + 1} \, \mathrm{d}\xi.
\end{align}
For ionic vacancies, one typically chooses a Blakemore type statistics \cite{Abdel2021Model} writing
\begin{align} \label{eq:Blakemore}
    \mathcal{F}(\eta) = \frac{S_\ions}{e^{-\eta}+1}.
\end{align}

\end{example}

\begin{example}[Recombination-generation rates] The hypotheses made on the recombination-generation rate include the radiative
\[
r_\text{rad}(n_{\electrons}, n_{\holes}) = r_{0, \text{rad}},
\]
with constant rate $r_{0, \text{rad}}$ and trap-assisted Schockley-Read-Hall (SRH) recombination
\[
r_\text{SRH}(n_{\electrons}, n_{\holes}) = \frac{1}{\tau_\electrons (n_\electrons+n_{\electrons,\tau}) + \tau_\holes(n_\holes+n_{\holes,\tau})},
\]
where $\tau_\electrons$, $\tau_\holes$ are the dimensionless carrier lifetimes and $n_{\electrons,\tau}$, $n_{\holes,\tau}$ the dimensionless reference carrier densities.
\end{example}
\begin{example}[External generation]
    Depending on the application, various models for the external optical generation rate are used.
    In solar cell simulations, a common choice is an exponentially decaying profile,
    \begin{align*}
        G(x) = G_0 \exp(-(x-y)\cdot e)\mathbf{1}_E(x), \qquad x = (x_1,\dots,x_d) \in \Omega,
    \end{align*}
    representing absorption along the illumination direction $e$ starting at $y\in E$ in the subdomain $E\subset \Omega$.
    For non-destructive diagnostic scanning techniques, a localized Gaussian generation profile is typical,
    \begin{align*}
        G(x) =
        \exp \bigl( -  \sum_{i=1}^d (x_i - \mu_i)^2 \bigr),
        \qquad x \in \Omega,
    \end{align*}
    centered at the laser beam position $\mu\in\Omega$.
    Both profiles and their associated applications will be illustrated
    in the numerical examples in \Cref{sec:Sim}.
\end{example}

Let us now state the main result of the paper.
\begin{theorem}\label{thm:existence_weak_solution} Under Assumption~\ref{main_ass}, there exists a weak solution \[(v_\ions, v_\electrons,v_\holes,\psi)\in\mathbb{R}\times\left(H^1(\Omega)\cap L^\infty(\Omega)\right)^3\] to the system \eqref{eq:model-dimless} in the sense of Definition~\ref{def:weak-solution}. Moreover, there are explicit positive constants $\overline{M}_\alpha, \overline{M}_\psi, \overline{N}, \underline{N}, \overline{N}_\ions, \underline{N}_\ions$ and $K$ such that for any weak solution to the system \eqref{eq:model-dimless},
\[
|v_\alpha|\leq \overline{M}_\alpha\quad\text{and}\quad  |\psi|\leq \overline{M}_\psi,
\]
for $\alpha\in\{\electrons, \holes, \ions\}$, almost everywhere in $\Omega$. The corresponding densities $n_\electrons$,  $n_\holes$, $n_\ions\in H^1(\Omega)\cap L^\infty(\Omega)$ satisfy \eqref{eq:model-drift-diff} in the weak sense as well as the uniform bounds
\[
0<\underline{N}\leq n_\electrons\leq \overline{N}\,,\quad 0<\underline{N}\leq n_\holes\leq \overline{N}\quad\text{and}\quad 0<\underline{N}_\ions\leq n_\ions\leq\overline{N}_\ions<S_\ions,
\]
almost everywhere in $\Omega$. Moreover, 
\[
\|\nabla v_\electrons\|_{L^2} + \|\nabla v_\holes\|_{L^2} + \|\nabla n_\electrons\|_{L^2} + \|\nabla n_\holes\|_{L^2} + \|\nabla n_\ions\|_{L^2}\leq K.
\]
All constants are explicit and depend on the data only through $\|G\|_{L^p}$, $\|C\|_{L^p}$, $\|\psi^D\|_{H^1\cap L^\infty}$, $p$, $r_0$, $N^D$, $\lambda$, $M_\ions$, $|z_\ions|$, $\mathcal{F}_\alpha$ for $\alpha\in\{\electrons, \holes, \ions\}$ and the domain $\Omega$.
\end{theorem}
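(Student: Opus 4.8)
The plan is to construct solutions by a truncated fixed-point argument in the quasi-Fermi variables $(v_\electrons,v_\holes)$, and to derive a priori bounds that are uniform in the truncation parameter so that the truncation can eventually be removed. Concretely, for a truncation level $m>0$ I would freeze a pair $(\bar v_\electrons,\bar v_\holes)\in L^2(\Omega)^2$, truncate it at height $m$, and define its image as follows. First, the ionic potential $v_\ions\in\mathbb{R}$ is obtained from the mass constraint in \eqref{eq:model-equilibrium ions}: since $\mathcal{F}_\ions$ is an increasing diffeomorphism onto $(0,S_\ions)$ by \eqref{hyp:statistics-a}, the map $v_\ions\mapsto\int_\Omega\mathcal{F}_\ions(v_\ions-z_\ions\psi)$ is continuous and strictly increasing with range $(0,|\Omega|S_\ions)$, so the compatibility condition \eqref{Mass:compatibility} yields a unique $v_\ions$, bounded in terms of $\|\psi\|_{L^\infty}$. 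Next, $\psi$ solves the nonlinear Poisson equation \eqref{eq:model-drift-diff-Poisson} with densities expressed through \eqref{eq:state-eq-stationary}; because each of $\mathcal{F}_\holes(v_\holes-\psi)$, $-\mathcal{F}_\electrons(v_\electrons+\psi)$ and $z_\ions\mathcal{F}_\ions(v_\ions-z_\ions\psi)$ is nonincreasing in $\psi$, this is a strictly monotone elliptic problem, uniquely solvable in $H^1(\Omega)\cap L^\infty(\Omega)$. Finally, freezing the resulting densities in the coefficients of \eqref{eq:model-cont-eq-n-dimless}--\eqref{eq:model-cont-eq-p-dimless} gives two linear elliptic problems whose solutions define the new $(v_\electrons,v_\holes)$. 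Continuity and compactness of this map on $L^2(\Omega)^2$ then deliver a fixed point via a Schauder fixed-point argument.

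The heart of the proof is a family of uniform two-sided $L^\infty$ bounds obtained by Stampacchia-type truncation. For the electric potential I would test the monotone Poisson equation with $(\psi-k)_+$ for $k\ge\|\psi^D\|_{L^\infty}$: on the super-level set the source is bounded above by $\mathcal{F}_\holes(v_\holes-k)+|z_\ions|S_\ions+|C|$, and using $\mathcal{F}_\holes(\eta)\le e^\eta$ together with the choice $k\gtrsim\|v_\holes\|_{L^\infty}$ the exponential contribution becomes $O(1)$, leaving an $L^p$ right-hand side with $p>d/2$ as in \eqref{hyp:CG}. This yields $\|\psi\|_{L^\infty}\lesssim\|v_\electrons\|_{L^\infty}+\|v_\holes\|_{L^\infty}+\|C\|_{L^p}+\|\psi^D\|_{L^\infty}$, linearly in the potentials, with a symmetric lower bound. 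For the quasi-Fermi potentials I would test \eqref{eq:model-cont-eq-n-dimless} with $(v_\electrons-k)_+$: the weighted coercivity $\int_{\{v_\electrons>k\}}n_\electrons|\nabla v_\electrons|^2$ controls the Dirichlet energy, while the source $G-R$ is bounded above by $G+r_0$ because the generation part of the recombination satisfies $-R\le r_0$ by \eqref{hyp:rate}. This is exactly the mechanism that removes any size restriction on $G$: no matter how large $\|G\|_{L^p}$ is, it enters the bound only through its finite $L^p$ norm.

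Once two-sided $L^\infty$ bounds on $v_\electrons,v_\holes,\psi$ and $v_\ions$ are available, the density bounds follow directly from the state equation \eqref{eq:state-eq-stationary}: since each $\mathcal{F}_\alpha$ is a diffeomorphism and its argument is now bounded, one obtains $0<\underline N\le n_\electrons,n_\holes\le\overline N$ and $0<\underline N_\ions\le n_\ions\le\overline N_\ions<S_\ions$, the strict upper ionic bound coming from $S_\ions=\sup\mathcal{F}_\ions$ in \eqref{hyp:statistics-a}. In particular, the positive lower density bound restores uniform ellipticity in the continuity equations. I would then recover the gradient estimates by testing the weak forms of \eqref{eq:model-cont-eq-n-dimless}--\eqref{eq:model-cont-eq-p-dimless} with $v_\alpha-v_\alpha^D$ and the Poisson equation with $\psi-\psi^D$; coercivity with constant $\underline N$ and the already established $L^\infty$ bounds give $\|\nabla v_\electrons\|_{L^2}+\|\nabla v_\holes\|_{L^2}\le K$, whence $\|\nabla n_\alpha\|_{L^2}\le K$ through $\nabla n_\alpha=\mathcal{F}_\alpha'(\cdot)\nabla(v_\alpha-z_\alpha\psi)$ and the boundedness of $\mathcal{F}_\alpha'$ on the relevant range. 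Since all these bounds are independent of the truncation level $m$, choosing $m$ larger than them shows that the fixed point solves the original untruncated system \eqref{eq:model-dimless} in the sense of Definition~\ref{def:weak-solution}.

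The main obstacle is the closure of the coupled estimate in the second step. Taken separately, the Poisson bound is linear in $\|v_\alpha\|_{L^\infty}$, but the weighted coercivity constant $\mathcal{F}_\electrons(k_0-\|\psi\|_{L^\infty})$ in the continuity estimate degenerates exponentially as $\|\psi\|_{L^\infty}$ grows, so a naive decoupling produces an exponential self-feedback that does not close for arbitrary $G$. Overcoming this is precisely where the iterative energy estimates enter: one must exploit the full sign structure of the recombination---namely that $R\ge0$ exactly when $v_\electrons+v_\holes\ge0$, with magnitude $\tilde r(e^{v_\electrons+v_\holes}-1)$---together with the fact that $G$ enters \eqref{eq:model-cont-eq-n-dimless} and \eqref{eq:model-cont-eq-p-dimless} symmetrically, so a large generation cannot drive one potential to an extreme value without the recombination sink or the companion equation reacting. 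I expect the delicate point to be organising these observations into a bootstrap in which the constants are tracked explicitly and the feedback loop between the drift coupling and the degenerate diffusion weight is tamed; the boundedness of the ionic density afforded by \eqref{hyp:statistics-a} is what keeps the ionic contribution to the Poisson equation harmless throughout.
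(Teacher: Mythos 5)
Your overall architecture (truncation, fixed point, uniform \emph{a priori} bounds via Stampacchia-type arguments, then removal of the truncation) matches the paper's, but there is a genuine gap at the heart of the argument, and you name it yourself without resolving it: the closure of the coupled $L^\infty$ estimate. Your plan bounds the potentials first, testing the Poisson equation with $(\psi-k)_+$ and the continuity equations with $(v_\alpha-k)_+$; the latter has coercivity weight $n_\alpha=\mathcal{F}_\alpha(v_\alpha-z_\alpha\psi)$, which degenerates exponentially as $\|\psi\|_{L^\infty}$ grows, while your Poisson bound grows with $\|v_\alpha\|_{L^\infty}$. Appealing to ``the sign structure of the recombination'' and the symmetry of $G$, and hoping a bootstrap can be organised, is not a proof: for \emph{upper} bounds the only usable information is $-R\leq r_0$ (the recombination acts as a sink only where $v_\electrons+v_\holes\geq 0$, which does not prevent either potential from becoming large), and no amount of explicit constant-tracking removes an exponential feedback loop.

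The paper breaks the circle by reversing the order and changing variables: the \emph{densities} are bounded first, in the drift-diffusion form \eqref{eq:model-drift-diff}, where by \eqref{hyp:statistics-n-p} the diffusion coefficient satisfies $D_\alpha\geq 1$, i.e.\ uniform ellipticity with no degenerate weight at all (Lemma~\ref{lem:equiv_density_potentials}). In Proposition~\ref{prop:upper-bounds-n} the continuity equations are tested with the bounded functions $\varphi(n_\alpha/\nu-1)/\nu$, where $\varphi(s)=\frac{s}{1+s}{\mathbf 1}_{s\geq 0}$, so that $G$ enters only through $\|G+r_0\|_{L^p}\|{\mathbf 1}_{\{n_\alpha\geq\nu\}}\|_{L^{p'}}$; this is what removes any smallness condition on $G$. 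The drift coupling is then eliminated not through an $L^\infty$ bound on $\psi$ but by testing the Poisson equation with $\phi=\log\left(1+(n_\electrons/\nu-1)_+\right)-\log\left(1+(n_\holes/\nu-1)_+\right)$: the contribution of $n_\holes-n_\electrons$ against $\phi$ is nonpositive by monotonicity of the logarithm, leaving only the bounded sources $z_\ions n_\ions+C$ (the boundedness of $\mathcal{F}_\ions$ is exactly what keeps the ionic term harmless here). Stampacchia's lemma applied to $\nu\mapsto{\mathbb E}_\electrons(\nu)+{\mathbb E}_\holes(\nu)$ then yields an explicit $\overline{N}$ with $n_\electrons,n_\holes\leq\overline{N}$, with no reference to $\|\psi\|_{L^\infty}$ whatsoever. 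Only afterwards does one bound $\psi$ (the Poisson right-hand side is now in $L^p$ with $p>d/2$), deduce the upper bounds on $v_\alpha$ from the state equation, and obtain the lower bounds on $v_\alpha$ (hence on the densities) by testing with $(v_\alpha+\overline{M}_v)_-$, using $G\geq 0$ and the sign of $\tilde{r}(e^{v_\electrons+v_\holes}-1)$ on the set $\{v_\alpha+\overline{M}_v\leq 0\}$. A secondary issue in your construction: you determine $v_\ions$ from the mass constraint ``given $\psi$'' and then solve the Poisson equation ``given $v_\ions$'', which is circular as stated; the paper solves the pair $(\psi,v_\ions)$ jointly as the minimiser of a strictly convex functional (Lemma~\ref{ex:uniqueness:nonlinear:Poisson}), and its fixed-point map is formulated in the density variables with truncated coefficients rather than in the quasi-Fermi variables.
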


The proof of Theorem~\ref{thm:existence_weak_solution} relies on two main steps.

First, we derive uniform bounds on weak solutions of \eqref{eq:model-dimless} in Section \ref{sec:apriori}. The main mathematical difficulty concerns the derivation of \emph{a priori} upper bounds on the densities $n_\electrons$ and $n_\holes$ and is due to the external generation term $G$. In order to derive these bounds we use an iterative energy argument reminiscent of the work of Stampacchia and De Giorgi, see \cite{degiorgi1957sulla, Stampacchia_1963, Stampacchia_1965}. We also refer to Vasseur \cite{vasseur2016Giorgi} and references therein for a modern perspective and to \cite{FaWu:01} for similar techniques used on a related drift-diffusion system.

Second, we build in Section \ref{sec:existenceproof} a solution to the system starting from an approximate system with truncated nonlinearities. This type of approximation is well-known for drift-diffusion systems, see for instance \cite{groger1987steady, Markowich86, herda2025charge, abdel2025analysis}. The existence for the truncated system follows from the Leray-Schauder theorem as well as a minimization  argument for the nonlinear Poisson equation. On the truncated system we show that the uniform bounds also hold which implies that their solution coincide with those of \eqref{eq:model-dimless} for large enough truncation threshold.

We point out that Theorem \ref{thm:existence_weak_solution} guarantees explicit bounds for the electric current 
\begin{equation}\label{outward:current}
	I_B\,=\,\int_{B}(j_\electrons + j_\holes)\cdot \nu \,\mathrm{d}\Gamma\,,\quad \textrm{where}\quad j_\alpha\,=\, - z_\alpha n_\alpha \nabla v_\alpha\,,\quad \alpha \in \{\electrons, \holes\}\,,
\end{equation}
flowing out of the system through an ohmic contact, that is, a boundary component $B$ which satisfies
\begin{equation}\label{ohmic:contact}
B\subset \Gamma^D\,,\quad \overline{B}\cap\overline{B^c\cap \Gamma^D}=\emptyset\,.
\end{equation}
This is of interest because it provides a rigorous framework in the context of laser beam induced current techniques~\cite{BFI:93,FaIt:96, Yin2009LBIC, Redfern2005LBIC}, as detailed in Section \ref{sec:Sim-LBIC}. We formalize this consequence in the following result.
\begin{corollary}\label{coro:bound:current}
    Consider an ohmic contact $B$, namely a subset of the boundary satisfying \eqref{ohmic:contact}. For any weak solution $(v_\ions, v_\electrons,v_\holes,\psi)$ to  \eqref{eq:model-dimless}, the outward current $I_B$ in \eqref{outward:current} is well defined. Furthermore, there exists a constant $K$ depending on the data as in Theorem~\ref{thm:existence_weak_solution} such that
    \[\left|I_B
    \right|\leq K\,.\]
\end{corollary}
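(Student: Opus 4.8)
The plan is to reduce the bound on $I_B$ to the already-established bounds from Theorem~\ref{thm:existence_weak_solution}, and the crux is to make sense of the boundary flux integral in \eqref{outward:current} even though a weak solution only lives in $H^1(\Omega)\cap L^\infty(\Omega)$ and has no classical normal trace. First I would observe that by Theorem~\ref{thm:existence_weak_solution} the fluxes $j_\alpha = -z_\alpha n_\alpha \nabla v_\alpha$ satisfy $\|j_\alpha\|_{L^2} \leq \|n_\alpha\|_{L^\infty}\|\nabla v_\alpha\|_{L^2} \leq \overline{N}\,K$, so each $j_\alpha \in (L^2(\Omega))^d$ with an explicit bound. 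The key additional structure is that the continuity equations \eqref{eq:model-cont-eq-n-dimless}--\eqref{eq:model-cont-eq-p-dimless} give $\nabla\cdot j_\alpha = -z_\alpha(G - R(n_\electrons,n_\holes))$ in the distributional sense, and the right-hand side lies in $L^p(\Omega)$ with $p>d/2$; in particular $j_\alpha$ has an $L^2$-divergence, so $j_\alpha \in H_{\mathrm{div}}(\Omega)$ with a controlled norm. This is precisely the setting in which a normal trace $j_\alpha\cdot\nu$ can be defined in $H^{-1/2}(\partial\Omega)$ via the Gauss--Green formula.

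The geometric condition \eqref{ohmic:contact} is what upgrades this weak normal trace into a genuine, well-defined integral over $B$. Because $B$ is separated from the rest of $\Gamma^D$ (i.e. $\overline{B}\cap\overline{B^c\cap\Gamma^D}=\emptyset$), one can construct a cutoff function $\chi\in H^1(\Omega)\cap L^\infty(\Omega)$, bounded by $1$, that equals $1$ on a neighborhood of $B$ and vanishes near the remaining Dirichlet boundary, while being free to take Neumann-type behavior elsewhere. I would then define
\begin{equation*}
I_B = \int_\Omega j\cdot\nabla\chi\,\mathrm{d}x + \int_\Omega (\nabla\cdot j)\,\chi\,\mathrm{d}x, \qquad j = j_\electrons + j_\holes,
\end{equation*}
which is the integration-by-parts identity: the left-hand side is the distributional pairing $\langle j\cdot\nu,\chi\rangle$, and since $\chi\equiv 1$ on $B$ and the complementary boundary contributions vanish (on $\Gamma^N$ the normal flux is zero by the Neumann condition \eqref{BC-neum:model_bipolar_stationary}, and on $\Gamma^D\setminus B$ the cutoff $\chi$ vanishes), this pairing computes exactly $\int_B j\cdot\nu\,\mathrm{d}\Gamma$. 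One must check that this value is independent of the admissible choice of $\chi$, which follows because any two such cutoffs differ by a function supported away from $B$ and vanishing near it, whence the difference pairs to zero against the divergence-free-on-$B$ flux.

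With $I_B$ thus represented, the bound is immediate from Cauchy--Schwarz and Hölder:
\begin{equation*}
|I_B| \leq \|j\|_{L^2}\|\nabla\chi\|_{L^2} + \|\nabla\cdot j\|_{L^p}\|\chi\|_{L^{p'}}.
\end{equation*}
The first term is controlled by $(\|j_\electrons\|_{L^2}+\|j_\holes\|_{L^2})\|\nabla\chi\|_{L^2} \leq 2\overline{N}K\,\|\nabla\chi\|_{L^2}$. For the second term, $\nabla\cdot j = -( G - R) - (G - R) \cdot$ (up to signs from $z_\electrons=-1$, $z_\holes=1$; in fact the two divergences combine and each is bounded) is controlled in $L^p$ by $\|G\|_{L^p} + \|R(n_\electrons,n_\holes)\|_{L^p}$, and the recombination term is bounded using \eqref{hyp:rate} and the uniform density bounds, giving $\|R\|_{L^p}\leq K(r_0,\overline{N},|\Omega|)$. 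Since $\chi$ and hence $\|\nabla\chi\|_{L^2}$, $\|\chi\|_{L^{p'}}$ depend only on the domain geometry and the ohmic contact $B$, every factor is explicit and depends on the data exactly as catalogued in Theorem~\ref{thm:existence_weak_solution}, which yields the desired constant $K$.

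The main obstacle I anticipate is the rigorous construction and well-definedness of $I_B$ as a genuine boundary integral rather than just an abstract $H^{-1/2}$ pairing: one has to verify that the separation hypothesis \eqref{ohmic:contact} genuinely localizes the trace to $B$ and that the definition is independent of the cutoff, exploiting that on $\Gamma^N$ the flux vanishes and that the Dirichlet parts outside $B$ are isolated. Once the normal trace framework for $H_{\mathrm{div}}$ fields is in place, the quantitative estimate itself is a routine application of Cauchy--Schwarz and Hölder together with the a priori bounds already furnished by the theorem.
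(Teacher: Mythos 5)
Your proposal takes essentially the same route as the paper: $L^2$ bounds on the fluxes from the $L^\infty$ and $H^1$ estimates of Theorem~\ref{thm:existence_weak_solution}, a Gauss--Green pairing against functions equal to $1$ on $B$ and $0$ on $B^c\cap\Gamma^D$ (such functions exist thanks to \eqref{ohmic:contact}), a cutoff-independence argument, and Cauchy--Schwarz. However, there is one load-bearing fact that you never state, and its absence makes parts of your write-up incorrect or internally inconsistent: subtracting \eqref{eq:model-cont-eq-p-dimless} from \eqref{eq:model-cont-eq-n-dimless} and using $z_\electrons=-z_\holes$ gives $\nabla\cdot(j_\electrons+j_\holes)=0$, i.e.\ the \emph{total} current is exactly divergence-free. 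This is the fact the paper's proof is built on.

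Concretely, two of your steps need repair. First, your claim that each $j_\alpha$ has an $L^2$ divergence is unjustified: the continuity equations only give $\nabla\cdot j_\alpha=z_\alpha(G-R)\in L^p(\Omega)$ with $p>d/2$, and for $d=3$ one may have $p<2$, so the individual fluxes need not lie in $H_{\mathrm{div}}(\Omega)$; by contrast the sum does lie in $H_{\mathrm{div}}(\Omega)$ (its divergence being zero), which is exactly how the paper obtains the $H^{-1/2}(\partial\Omega)$ normal trace. Second, your well-definedness argument appeals to the flux being ``divergence-free'', while your final estimate keeps a nonzero term $\|\nabla\cdot j\|_{L^p}\|\chi\|_{L^{p'}}$ bounded by $\|G\|_{L^p}+\|R\|_{L^p}$; these two statements contradict each other, and indeed your own formula $\nabla\cdot j_\alpha=-z_\alpha(G-R)$ already forces the sum of the divergences to vanish. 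The cutoff-independence does hold, but the clean justification is the paper's: for $w=\chi_1-\chi_2\in H^1_D(\Omega)$, summing the two weak formulations \eqref{eq:weak-solution-vnp} with the signs coming from $j_\alpha=-z_\alpha n_\alpha\nabla v_\alpha$ gives $\int_\Omega(j_\electrons+j_\holes)\cdot\nabla w\,\mathrm{d}x=0$ because the right-hand sides $\pm\int_\Omega(G-R)w\,\mathrm{d}x$ cancel, and no separate divergence term arises since $\nabla\cdot(j_\electrons+j_\holes)=0$. Once this cancellation is made explicit, your spurious extra term disappears, the bound reduces to $|I_B|\leq(\|j_\electrons\|_{L^2}+\|j_\holes\|_{L^2})\inf_\chi\|\nabla\chi\|_{L^2}$, and your proof coincides with the paper's.
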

This result is a direct consequence of Theorem \ref{thm:existence_weak_solution}. We detail its proof in Section \ref{proof:cor}.

As a final comment, let us mention that uniqueness of solutions to \eqref{eq:model-dimless} is known to be false in general for the system \eqref{eq:model-dimless} even in simple settings, see \cite{alabau1995uniqueness} and references therein.

The analytical results of Theorem~\ref{thm:existence_weak_solution} are complemented in Section~\ref{sec:Sim} with physically motivated numerical simulations.
In two representative applications to study how the generation rate $G$, the Debye length $\lambda$, and the doping profile $C$ influence carrier densities and potential profiles. For these simulations, we employ a stationary version of the time-implicit two-point flux finite volume scheme introduced in \cite{Abdel2023Existence}. While this is outside of the scope of the present paper, we mention that the techniques developed here could be adapted to the finite volume numerical discretizations of the system to show existence and uniform bounds on the approximate solution with uniform dependency on the mesh size. We refer to the work of the third and fourth author of the present paper in \cite{chainais2020linf} (see also \cite{chainais2017finite}) for an example in the case of a simpler linear steady convection diffusion system with mixed boundary conditions.

\subsection{Outline}

Section~\ref{sec:existence} is dedicated to the proof of Theorem~\ref{thm:existence_weak_solution}. In Section~\ref{sec:apriori}, we derive uniform upper and lower bounds for weak solutions of \eqref{eq:model-dimless} using an iterative energy argument for the upper bounds on the densities and monotonicity / maximum principle arguments for the rest of the bounds. In Section~\ref{sec:existenceproof}, we build a solution to the system from an approximate version with truncated nonlinearities. Then, Section~\ref{sec:Sim} is dedicated to numerical investigation of the bounds on two test cases: a three-layer perovskite solar cell with anion vacancies and a two-dimensional laser-beam-induced-current setup for the a classical two-species drift-diffusion system. Finally we recall in appendix a technical lemma due to Stampacchia which is crucial in the iterative energy argument.

\section{Existence of weak solutions and uniform bounds}\label{sec:existence}

This section is dedicated to the proof of Theorem~\ref{thm:existence_weak_solution}. We start with a precise definition of the notion of weak solution in Section~\ref{sec:weaksol}. Then, we show \emph{a priori} estimates on weak solutions in Section~\ref{sec:apriori}. Finally, we prove the existence of weak solutions to a truncated system of equations in Section~\ref{sec:existenceproof}. Adapting the \emph{a priori} estimates to the truncated system, we deduce that a weak solution to the truncated system is indeed a weak solution to the initial system.

In the following we use the functional space
$$
H^1_D(\Omega)=\{ u \in H^1(\Omega) \mbox{ such that } u\vert_{\Gamma^D}=0\},
$$
where $u\vert_{\Gamma^D}$ denotes the trace of $u$ on $\Gamma^D$. For any function in $H^1_D(\Omega)$, we denote by $K_p$ the Poincaré-Sobolev constant (depending only on $p$ and $\Omega$) of the continuous and compact embedding $H^1_D(\Omega)\subset L^p(\Omega)$, namely
\begin{equation}\label{eq:embed}
\|u\|_{L^p}\leq K_p \|\nabla u\|_{L^2}\,,\quad \text{for all }u\in H^1_D(\Omega),
\end{equation}
 which holds for any $p\in[1,\infty]$ if $d=1$, $p\in[1,\infty)$ if $d=2$ and $p\in[1,2d/(d-2)]$ if $d>2$.
\subsection{Notion of weak solution}\label{sec:weaksol}

 \begin{definition}\label{def:weak-solution} A quadruplet
$(v_\ions, v_\electrons,v_\holes,\psi)\in \mathbb{R}\times(H^1(\Omega)\cap L^{\infty}(\Omega))^3$ is called a weak solution to \eqref{eq:model-dimless} if it satisfies the mass constraint
\begin{equation}\label{boundary:ions}
\int_\Omega \mathcal{F}_\ions \left(v_{\ions}- z_\ions \psi\right)\, \mathrm{d}x \,=\, M_{\ions}\,,
\end{equation}
and there exists $({\widehat v}_{\electrons}, {\widehat v}_{\holes},{\widehat \psi}) \in (H^1_D(\Omega))^3$
such that $ (v_{\electrons}={\widehat v}_{\electrons} + v_{\electrons}^D,v_{\electrons} ={\widehat v}_{\holes} + v_{\holes}^D,\psi = {\widehat \psi} +\psi^D) $
and for all test functions $ (u_{\electrons},  u_{\holes}, \phi)\in (H^1_D(\Omega))^3,$
  \begin{subequations} \label{eq:weak-solution}
\begin{align}
\int_\Omega n_{\alpha} \nabla {v}_{\alpha} \cdot \nabla {{u_{\alpha}}}\,\mathrm{d} x&=\int_\Omega (G-R(n_{\electrons}, n_{\holes})){{u_{\alpha}}}\,\mathrm{d} x,\quad \mbox{ for }\alpha=\electrons,\holes,\label{eq:weak-solution-vnp}\\
\lambda^2\int_\Omega \nabla{\psi} \cdot \nabla   { \phi} \,\mathrm{d} x
&=\int_\Omega \left(n_{\holes}-n_{\electrons}+z_{\ions}\mathcal{F}_\ions \left(v_\ions- z_\ions \psi\right)+C\right){ \phi} \,\mathrm{d} x
\,,\label{eq:weak-solution-psi}
\end{align}
\end{subequations}
where the densities $n_{\electrons},n_{\holes}$ are defined  by \eqref{eq:state-eq-stationary}.
\end{definition}
\begin{lemma}\label{lem:equiv_density_potentials}
Under Assumption~\ref{main_ass}, for any weak solution $(v_\ions, v_\electrons,v_\holes,\psi)$ in the sense of Definition~\ref{def:weak-solution}, the related densities $n_\ions$, $n_\electrons$, $n_\holes$ belong to $H^1(\Omega)\cap L^{\infty}(\Omega)$. Moreover they are bounded from below by positive constants and satisfy \eqref{eq:model-drift-diff} in the weak sense, that is
\begin{subequations}\label{eq:weak-solution-DD}
\begin{align}
 \int_\Omega (z_\alpha n_{\alpha}\nabla \psi + D_\alpha(n_{\alpha}) \nabla n_{\alpha}) \cdot \nabla {{u_{\alpha}}} \,\mathrm{d} x &=\int_\Omega (G-R(n_{\electrons}, n_{\holes})){{u_{\alpha}}} \,\mathrm{d} x,\quad \mbox{ for }\alpha=\electrons,\holes,\label{eq:weak-solution-DD-vnp}\\
\lambda^2\int_\Omega \nabla \psi\cdot \nabla   { \phi} \,\mathrm{d} x &=\int_\Omega (n_{\holes}-n_{\electrons}+z_{\ions}n_{\ions}+C){ \phi} \,\mathrm{d} x,\label{eq:weak-solution-DD-psi}
\end{align}
\end{subequations}
for any test functions $ (u_{\electrons},  u_{\holes}, \phi)\in (H^1_D(\Omega))^3$. Moreover the diffusion coefficients defined in \eqref{def:D} satisfy $D_\alpha(x)\geq 1$ for all $x\in  (0,+\infty)$.
\end{lemma}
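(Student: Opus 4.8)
The plan is to dispatch the three assertions separately, the only genuinely delicate point being the validity of the chain rule for Sobolev functions; everything else reduces to algebraic manipulation of the state equation \eqref{eq:state-eq-stationary}.

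First I would settle the regularity and positivity of the densities. For $\alpha\in\{\electrons,\holes\}$ the argument $\xi_\alpha\coloneqq v_\alpha-z_\alpha\psi$ lies in $H^1(\Omega)\cap L^\infty(\Omega)$ since both $v_\alpha$ and $\psi$ do, and $\xi_\ions\coloneqq v_\ions-z_\ions\psi$ is likewise in $H^1\cap L^\infty$ because $v_\ions$ is a constant. Writing $M_\alpha\coloneqq\|\xi_\alpha\|_{L^\infty}$ and using that each $\mathcal{F}_\alpha$ is a $C^1$-diffeomorphism onto its image (hence strictly increasing by $\mathcal{F}_\alpha'>0$), the state equation gives the pointwise a.e. bounds $\mathcal{F}_\alpha(-M_\alpha)\leq n_\alpha\leq\mathcal{F}_\alpha(M_\alpha)$ with $\mathcal{F}_\alpha(-M_\alpha)>0$; for ions this additionally yields $n_\ions<S_\ions$. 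This shows $n_\alpha\in L^\infty(\Omega)$, bounded below by a positive constant (these constants depend on the particular solution, not uniformly; the uniform versions are the object of Section~\ref{sec:apriori}). For the $H^1$ membership I would invoke the standard composition rule: since $\mathcal{F}_\alpha\in C^1$ and $\xi_\alpha$ is essentially bounded, $n_\alpha=\mathcal{F}_\alpha(\xi_\alpha)\in H^1(\Omega)$ with
\[
\nabla n_\alpha=\mathcal{F}_\alpha'(\xi_\alpha)\,\nabla\xi_\alpha,
\]
the factor $\mathcal{F}_\alpha'(\xi_\alpha)$ being bounded because $\mathcal{F}_\alpha'$ is continuous and $\xi_\alpha$ takes values in the compact interval $[-M_\alpha,M_\alpha]$.

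Second, the bound $D_\alpha\geq 1$ is immediate: setting $\eta=\mathcal{F}_\alpha^{-1}(x)$ and differentiating the inverse gives $(\mathcal{F}_\alpha^{-1})'(x)=1/\mathcal{F}_\alpha'(\eta)$, so that by the definition \eqref{def:D},
\[
D_\alpha(x)=x\,(\mathcal{F}_\alpha^{-1})'(x)=\frac{\mathcal{F}_\alpha(\eta)}{\mathcal{F}_\alpha'(\eta)}\geq 1,
\]
using $\mathcal{F}_\alpha'(\eta)\leq\mathcal{F}_\alpha(\eta)$ from \eqref{hyp:statistics-n-p} (resp. \eqref{hyp:statistics-a}). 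Third, I would pass from \eqref{eq:weak-solution} to the drift-diffusion form \eqref{eq:weak-solution-DD}. The Poisson equation requires nothing new: the identity $z_\ions\mathcal{F}_\ions(v_\ions-z_\ions\psi)=z_\ions n_\ions$ together with $z_\electrons n_\electrons+z_\holes n_\holes=n_\holes-n_\electrons$ (recall $z_\electrons=-z_\holes=-1$) turns \eqref{eq:weak-solution-psi} verbatim into \eqref{eq:weak-solution-DD-psi}. For the continuity equations, the composition rule applies equally to $\mathcal{F}_\alpha^{-1}$ (it is $C^1$ and $n_\alpha$ is valued in a compact subset of $(0,+\infty)$ by the first step), so differentiating $v_\alpha=\mathcal{F}_\alpha^{-1}(n_\alpha)+z_\alpha\psi$ yields $\nabla v_\alpha=(\mathcal{F}_\alpha^{-1})'(n_\alpha)\nabla n_\alpha+z_\alpha\nabla\psi$, whence
\[
n_\alpha\nabla v_\alpha=D_\alpha(n_\alpha)\,\nabla n_\alpha+z_\alpha n_\alpha\nabla\psi.
\]
Substituting this into \eqref{eq:weak-solution-vnp} gives \eqref{eq:weak-solution-DD-vnp} for the same class of test functions, completing the equivalence.

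The main obstacle — indeed the only point demanding care — is the rigorous justification of the chain rule in $H^1\cap L^\infty$. I would handle it by truncating or extending $\mathcal{F}_\alpha$ (and $\mathcal{F}_\alpha^{-1}$) outside the compact range of $\xi_\alpha$ (resp. $n_\alpha$) to a globally Lipschitz $C^1$ function with bounded derivative, which leaves the composition unchanged almost everywhere, and then appealing to the classical Sobolev composition theorem for a $C^1$ function with bounded derivative applied to an $H^1$ argument. It is precisely here that the $L^\infty$ bounds obtained in the first step are indispensable, since without them the derivatives $\mathcal{F}_\alpha'$ and $(\mathcal{F}_\alpha^{-1})'$ need not be bounded along the solution.
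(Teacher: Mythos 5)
Your proof is correct and follows essentially the same route as the paper's (much terser) argument: bounds and $H^1$ regularity of the densities via the state equation \eqref{eq:state-eq-stationary} and Assumption~\ref{main_ass}, the pointwise identity $n_\alpha\nabla v_\alpha = D_\alpha(n_\alpha)\nabla n_\alpha + z_\alpha n_\alpha\nabla\psi$ to pass between the two weak formulations, and $\mathcal{F}_\alpha'\leq\mathcal{F}_\alpha$ to get $D_\alpha\geq 1$. The only difference is that you spell out the chain-rule justification (truncation to a globally Lipschitz $C^1$ function, made legitimate by the $L^\infty$ bounds) that the paper leaves implicit, which is a welcome addition rather than a deviation.
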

\begin{proof} The bounds and $H^1(\Omega)$ regularity of the densities follow from the state equation \eqref{eq:state-eq-stationary} and Assumption~\ref{main_ass}. The drift-diffusion form on the densities then follows from the identity
\[
n_{\alpha} \nabla {v}_{\alpha}= D_\alpha(n_{\alpha}) \nabla n_{\alpha}+z_\alpha n_\alpha \nabla \psi.
\]
Finally the bound $D_\alpha(x)\geq 1$ is a consequence of \eqref{hyp:statistics-n-p}.
\end{proof}

\subsection{A priori estimates}\label{sec:apriori}

In the following, we derive upper bounds for the densities $n_{\electrons}$ and $n_{\holes}$ in Section~\ref{sec:upper}. Then, we derive upper and lower bounds on the potentials $v_{\electrons}, v_{\holes}, v_{\ions}$ and $\psi$ in Section~\ref{sec:boundspot}.

\subsubsection{Upper bounds on the densities}\label{sec:upper}

\begin{proposition}\label{prop:upper-bounds-n}
Under Assumption \ref{main_ass}, there exists an explicit constant ${\overline N}\geq N^D$ such that any weak solution to \eqref{eq:model-dimless} in the sense of Definition \ref{def:weak-solution} verifies
\begin{equation}\label{upper-bounds-n}
0<n_\electrons \leq {\overline N} \mbox{ and }0<n_\holes \leq {\overline N}\mbox{ almost everywhere in } \Omega.
\end{equation}
\end{proposition}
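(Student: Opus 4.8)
The plan is to obtain the bound by a De Giorgi--Stampacchia iteration on the super-level sets $A_k^\alpha = \{n_\alpha > k\}$, carried out on the drift-diffusion weak form \eqref{eq:weak-solution-DD-vnp} from Lemma~\ref{lem:equiv_density_potentials}, where the coercivity $D_\alpha \geq 1$ is available. For $\alpha \in \{\electrons,\holes\}$ and a level $k \geq N^D$, I would test with $w_\alpha = (n_\alpha - k)_+$; this lies in $H^1_D(\Omega)$ precisely because $k \geq N^D$ makes its trace vanish on $\Gamma^D$ by \eqref{def:ND}. Since $\nabla w_\alpha = \mathbf{1}_{A_k^\alpha}\nabla n_\alpha$, using $D_\alpha \geq 1$ and $R \geq -r_0$ (from \eqref{hyp:rate}) gives an inequality of the form
\[
\|\nabla w_\alpha\|_{L^2}^2 \leq \int_\Omega (G + r_0)\,w_\alpha\,\mathrm{d}x + z_\alpha \int_\Omega n_\alpha\, \nabla \psi \cdot \nabla w_\alpha\,\mathrm{d}x .
\]
The last, unsigned drift term is the main obstacle: it couples the density to the electric potential, which is not yet controlled at this stage.

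The idea to handle it is to integrate the drift term against the Poisson equation. Introducing $\Theta_k(s) = \tfrac12 (s^2-k^2)_+$, so that $\nabla \Theta_k(n_\alpha) = n_\alpha \nabla w_\alpha$ and $\Theta_k(n_\alpha) \in H^1_D(\Omega)$, I would use $\Theta_k(n_\alpha)$ as a test function in \eqref{eq:weak-solution-DD-psi}. This rewrites the drift contribution as $\tfrac{z_\alpha}{\lambda^2}\int_\Omega (n_\holes - n_\electrons + z_\ions n_\ions + C)\,\Theta_k(n_\alpha)\,\mathrm{d}x$. Taken individually, the factors $n_\holes$ (for $\alpha = \electrons$) and $n_\electrons$ (for $\alpha = \holes$) are uncontrolled, so the crucial step is to \emph{sum} the electron and hole inequalities. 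The self-consistent part then collapses to
\[
\frac{1}{\lambda^2}\int_\Omega (n_\holes - n_\electrons)\bigl(\Theta_k(n_\electrons) - \Theta_k(n_\holes)\bigr)\,\mathrm{d}x \leq 0,
\]
which is nonpositive because $\Theta_k$ is nondecreasing, and may thus be discarded. The remaining coupling $\tfrac1{\lambda^2}\int_\Omega (z_\ions n_\ions + C)\bigl(\Theta_k(n_\electrons) - \Theta_k(n_\holes)\bigr)\,\mathrm{d}x$ is harmless: it is controlled by $0 \leq n_\ions \leq S_\ions$ from \eqref{hyp:statistics-a} and by $C \in L^p(\Omega)$ from \eqref{hyp:CG}.

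It then remains to close the iteration by Sobolev embedding and Stampacchia's lemma. Bounding $\Theta_k(n_\alpha) \leq \tfrac12 w_\alpha^2 + k\, w_\alpha$, the source- and ion-type contributions linear in $w_\alpha$ are estimated by Hölder against the $L^p$ data together with the embedding \eqref{eq:embed}, which produces factors $|A_k^\alpha|^\theta$ with $\theta > 0$; this is exactly where the hypothesis $p > d/2$ enters. The genuinely quadratic contributions $\int_\Omega |C|\,w_\alpha^2\,\mathrm{d}x$ and $\int_\Omega w_\alpha^2\,\mathrm{d}x$ are, again by \eqref{eq:embed} and Hölder, of the form $K\,\|\nabla w_\alpha\|_{L^2}^2\,|A_k^\alpha|^{\theta'}$ with $\theta' > 0$, and can be absorbed into the left-hand side once $k$ is large enough for $|A_k^\alpha|$ to be small, with an explicit threshold obtained from the elementary starting estimate $|A_{N^D}^\alpha| \leq |\Omega|$. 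The resulting energy control, combined with \eqref{eq:embed} in the form $\|w_\alpha\|_{L^q} \leq K_q \|\nabla w_\alpha\|_{L^2}$ for a fixed admissible exponent $q > 2$ and the elementary inequality $|A_h^\alpha|\,(h-k)^q \leq \int_{A_k^\alpha} w_\alpha^{q}\,\mathrm{d}x$ for $h > k$, yields a super-linear recursion $|A_h^\electrons| + |A_h^\holes| \leq C\,(h-k)^{-\gamma}\,(|A_k^\electrons| + |A_k^\holes|)^{1+\epsilon}$ with $\epsilon > 0$. The Stampacchia lemma recalled in the appendix then forces the level-set measure to vanish beyond an explicit level $\overline N \geq N^D$, which is the claimed upper bound. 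The strict positivity $n_\alpha > 0$ is immediate from the state equation \eqref{eq:state-eq-stationary} and $\mathcal{F}_\alpha > 0$.

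In summary, the heart of the argument is the sign of the electrostatic coupling obtained after summation over the two carriers; once this cancellation is secured, the proof reduces to a careful but standard De Giorgi iteration, with the integrability threshold $p > d/2$ guaranteeing the super-linear exponent required by Stampacchia's lemma.
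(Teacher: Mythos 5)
Your structural skeleton is the same as the paper's: test the continuity equations with a truncation supported on $\{n_\alpha>k\}$, transfer the drift term to the Poisson equation tested with the matched primitive, sum over the two carriers so that the self-consistent part $(n_\holes-n_\electrons)\bigl(\Theta_k(n_\electrons)-\Theta_k(n_\holes)\bigr)\leq 0$ drops out by monotonicity, and close with Stampacchia's lemma. The cancellation idea is exactly right. However, your specific choice of truncation --- $w_\alpha=(n_\alpha-k)_+$ in the continuity equation, hence $\Theta_k(s)=\tfrac12(s^2-k^2)_+$ in the Poisson equation --- creates a gap that the paper's choice is precisely designed to avoid.

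The gap is the absorption step. After Hölder and Sobolev, the quadratic contribution $\lambda^{-2}\int_\Omega |z_\ions n_\ions + C|\,w_\alpha^2\,\mathrm{d}x$ is bounded by $K\lambda^{-2}\|z_\ions n_\ions+C\|_{L^p}\,|A_k^\alpha|^{\theta'}\|\nabla w_\alpha\|_{L^2}^2$, i.e., it carries the \emph{full} energy $\|\nabla w_\alpha\|_{L^2}^2$, with a coefficient that is small only if $|A_k^\alpha|$ is small (or if the data $\|C\|_{L^p}$, $S_\ions$, $\lambda^{-2}$ are small, which is not assumed). Your claim that an ``explicit threshold'' for this smallness follows from $|A_{N^D}^\alpha|\leq|\Omega|$ is not correct: that estimate gives no decay of $k\mapsto|A_k^\alpha|$ whatsoever. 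At this stage of the argument there is no a priori integral bound on $n_\electrons$, $n_\holes$ (their mass is not prescribed, only the ionic mass is, and the potential bounds of Proposition~\ref{prop:bounds-potentials} are derived \emph{after} and \emph{from} this proposition), so the only level beyond which $|A_k^\alpha|$ is guaranteed small depends on $\|n_\alpha\|_{L^\infty}$ --- the very quantity being estimated. The argument becomes circular, and the resulting $\overline N$ would depend on the solution rather than only on the data, contradicting the uniformity asserted in the statement. A secondary, related defect: the linear part $k\int_\Omega |z_\ions n_\ions+C|\,w_\alpha\,\mathrm{d}x$ injects a factor $k^2$ into the energy bound, so the recursion you obtain is $|A_h|\leq C(1+k^2)^{q/2}(h-k)^{-q}|A_k|^{1+\epsilon}$ with a \emph{level-dependent} constant; Lemma~\ref{lem:stamp} does not apply as stated (this part is repairable by geometric levels, i.e., a logarithmic change of variables, but you do not address it).

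This is why the paper works with the bounded test function $\varphi(w)=\tfrac{w}{1+w}\mathbf{1}_{w\geq0}$ and its logarithmic companion: the energy becomes $\mathbb{E}_\alpha(\nu)=\|\nabla\log(1+(n_\alpha/\nu-1)_+)\|_{L^2}^2$, the generation term is controlled by level-set measures alone (since $\varphi\leq 1$), and, crucially, the Poisson coupling $\lambda^{-2}\int(z_\ions n_\ions+C)\log(1+w_+)\,\mathrm{d}x$ is \emph{linear} in the function $\log(1+w_+)$ whose Dirichlet energy is $\mathbb{E}_\alpha(\nu)$. Young's inequality then absorbs $\tfrac12\mathbb{E}_\alpha(\nu)$ unconditionally --- no smallness of data or of level sets is needed --- and the logarithmic structure simultaneously removes the $k$-growth, since the iteration runs in the variable $\log\nu$. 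To repair your proof you would have to replace the power truncation by one with at most logarithmic growth; with $(n_\alpha-k)_+$ the iteration cannot be closed under the stated hypotheses.
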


\begin{proof} The lower bounds follow from Lemma~\ref{lem:equiv_density_potentials}. In order to prove the upper bounds, we will adapt a method essentially due to  Stampacchia \cite{Stampacchia_1963, Stampacchia_1965}.
For $\alpha\in \{\electrons,\holes\}$, we define the function ${\mathbb E}_{\alpha} : \nu\in [N^D,+\infty)\mapsto  {\mathbb E}_{\alpha} (\nu)\in {\mathbb R}_+$ by:
\begin{equation}\label{def:E_alpha}
{\mathbb E}_{\alpha}(\nu)=\int_\Omega \left\vert \nabla \log \left(1+ \bigl(\frac{n_\alpha}{\nu}-1\bigr)_+\right)\right\vert^2 \,\mathrm{d} x, \quad \forall \nu\geq N^D\;,\; \alpha\in \{\electrons,\holes\}\,.
\end{equation}
We note that ${\mathbb E}_\alpha$ is a  non-increasing function of $\nu$. Moreover, as $\log(1+ \bigl(\frac{n_\alpha}{\nu}-1\bigr)_+)$ belongs to $H^1_D(\Omega)$ for $\nu \geq N^D$, the Poincar\'e inequality writes
$$
\left\|\log\left(1+ \bigl(\frac{n_\alpha}{\nu}-1\bigr)_+\right)\right\|_{L^2(\Omega)}^2\leq K_2^2 {\mathbb E}_{\alpha}(\nu),\quad  \forall \nu \geq N^D.
$$
Our aim is to establish the existence of ${\overline N}$ such that
${\mathbb E}_{\alpha}({\overline N})=0$ , so that $n_\alpha \leq {\overline N}$ for $\alpha=\electrons,\holes$.

Let us introduce  the non-negative function, bounded by $1$, vanishing on $(-\infty,0)$, defined by
$$
\varphi : s\in{\mathbb R} \mapsto \frac{s}{1+s} {\mathbf 1}_{s\geq 0}.
$$
and the auxiliary function $w_{\nu,\alpha}=\frac{n_\alpha}{\nu}-1$.
They satisfy $\nabla n_\alpha = \nu\nabla w_{\nu,\alpha}$  and
$$
{\mathbb E}_{\alpha}(\nu)=\int_\Omega \left \vert \nabla \log \left(1+ \left(w_{\nu,\alpha}\right)_+\right)\right\vert^2 \,\mathrm{d} x =\int_\Omega \varphi'(w_{\nu,\alpha}) \vert \nabla w_{\nu,\alpha}\vert^2 \,\mathrm{d} x.
$$

For the test function $u_{\alpha}= \frac{\varphi( w_{\nu,\alpha})}{\nu}$, the lower bound on the diffusion coefficient $D_\alpha$ implies that
\begin{equation}\label{energy-meth:ineq1}
\int_\Omega D_\alpha(n_\alpha)\nabla n_\alpha \cdot \nabla u_\alpha \,\mathrm{d} x =\int_\Omega D_\alpha(n_\alpha)\nabla w_{\nu,\alpha}\cdot \nabla \varphi( w_{\nu,\alpha}) \,\mathrm{d} x \geq {\mathbb E}_{\alpha}(\nu), \quad \mbox{ for } \alpha=\electrons,\holes.
\end{equation}
Moreover, the upper bound \eqref{hyp:rate} implies that $R(n_{\electrons}, n_{\holes})\geq -r_0$, so that, with the $L^p$ bound on $G$, we obtain thanks to the Hölder inequality
\begin{multline}
\int_\Omega (G-R(n_{\electrons}, n_{\holes}))(u_{\electrons}+u_{\holes}) \,\mathrm{d} x=\frac{1}{\nu} \int_\Omega (G-R(n_{\electrons},n_{\holes}))\left(\varphi(w_{\nu,\electrons})+\varphi( w_{\nu,\holes})\right) \,\mathrm{d} x\\
\leq \frac{\Vert G +r_0\Vert_{L^p}}{N^D} \left(\left\|{\mathbf 1}_{\{ n_{\electrons}\geq \nu\}}\right\|_{L^{p'}} + \left\|{\mathbf 1}_{\{ n_{\electrons}\geq \nu\}}\right\|_{L^{p'}}\right).
\label{energy-meth:ineq2}
\end{multline}
Then, applying \eqref{eq:weak-solution-DD-vnp} for $\alpha=\electrons, \holes$ with the corresponding $u_{\alpha}$, summing the two equalities and using \eqref{energy-meth:ineq1}, \eqref{energy-meth:ineq2}, we get
\begin{multline}
 {\mathbb E}_{\electrons}(\nu)+ {\mathbb E}_{\holes}(\nu)\leq \int_\Omega \nabla \psi\cdot \left((w_{\nu,\electrons}+1) \nabla \varphi(w_{\nu,\electrons})-(w_{\nu,\holes}+1)\nabla \varphi(w_{\nu,\holes})\right) \,\mathrm{d} x \\
+\frac{\| G +r_0\|_{L^p}}{N^D} \left(\left\|{\mathbf 1}_{\{ n_{\electrons}\geq \nu\}}\right\|_{L^{p'}} + \left\|{\mathbf 1}_{\{ n_{\holes}\geq \nu\}}\right\|_{L^{p'}}\right).\label{energy-meth:ineq3}
\end{multline}
Using in  \eqref{eq:weak-solution-DD-psi} the test function $\phi = \log(1+(w_{\nu,\electrons})_+)-\log (1+(w_{\nu,\holes})_+)$, which satisfies
$$
\nabla \phi=(w_{\nu,\electrons}+1) \nabla \varphi(w_{\nu,\electrons})-(w_{\nu,\holes}+1)\nabla \varphi(w_{\nu,\holes}),
$$
we obtain that
\begin{multline}
\int_\Omega \nabla \psi\cdot \left((w_{\nu,\electrons}+1) \nabla \varphi(w_{\nu,\electrons})-(w_{\nu,\holes}+1)\nabla \varphi(w_{\nu,\holes})\right) \,\mathrm{d} x=\\
\frac{1}{\lambda^2}\int_\Omega (n_{\holes}-n_{\electrons}+
z_{\ions}n_\ions+C) \left(\log\left(1+\bigl(\frac{n_\electrons}{\nu}-1\bigr)_+\right)-\log\left(1+\bigl(\frac{n_\holes}{\nu}-1\bigr)_+\right)\right) \,\mathrm{d} x\\
\leq \frac{1}{\lambda^{2}}\int_\Omega  (z_{\ions}n_\ions+ C)\left(\log\left(1+\bigl(\frac{n_\electrons}{\nu}-1\bigr)_+\right)-\log\left(1+\bigl(\frac{n_\holes}{\nu}-1\bigr)_+\right)\right) \,\mathrm{d} x,\label{energy-meth:ineq4}
\end{multline}
due to the monotonicity of the $\log$ function. Now, since $p>d/2$, one has that $q = 2p'= 2p/(p-1)$ is such that  $H^1_D(\Omega)$ embeds continuously in $L^q(\Omega)$, see \eqref{eq:embed}. Moreover $\frac2q +\frac1p = 1$  so using successively Hölder,  Sobolev and Young's inequalities one obtains
\begin{align*}
&\frac{1}{\lambda^{2}}
\left\vert\int_\Omega\left(C+z_{\ions}n_\ions
\right)\left(\log(1+(w_{\nu,\alpha})_+\right) \,\mathrm{d} x\right\vert\\
&\qquad\leq  \frac{1}{\lambda^{2}}\left\|C+z_\ions n_\ions\right\|_{L^p} \left\|{\mathbf 1}_{\{ n_{\alpha}\geq \nu\}}\right\|_{L^q}\left\|\log(1+(w_{\nu,\alpha})_+)\right\|_{L^q}\\
&\qquad\leq\frac{1}{2}{\mathbb E}_{\alpha}(\nu)
+\frac{K_{q}^2}{2\lambda^4}
\left\|C+z_\ions n_\ions\right\|_{L^p}^2
\left\|{\mathbf 1}_{\{ n_{\electrons}\geq \nu\}}\right\|_{L^q}^2.
\end{align*}
Combined with \eqref{energy-meth:ineq3}-\eqref{energy-meth:ineq4}, this yields, for all $\nu\geq N^D$,
\begin{multline}\label{energy-meth:step1}
{\mathbb E}_{\electrons}(\nu)+ {\mathbb E}_{\holes}(\nu)\\\leq \left( \frac{K_{q}^2 }{\lambda^4}
\left\|C+z_\ions n_\ions\right\|_{L^p}^2
+ \frac{2}{N^D}\| G +r_0\|_{L^p}\right)
\left(\left\|{\mathbf 1}_{\{ n_{\electrons}\geq \nu\}}\right\|_{L^q}^2 + \left\|{\mathbf 1}_{\{ n_{\holes}\geq \nu\}}\right\|_{L^q}^2\right).
\end{multline}
For $r>q = 2p/(p-1)$ such that $H^1_D(\Omega)\subset L^r(\Omega)$, which exists since $p>d/2$ (see \eqref{eq:embed}), observe that for $\mu >\nu\geq N^D$ and  $\alpha= \electrons, \holes$,
$$
{\mathbf 1}_{ \{n_\alpha\geq \mu\}}\leq \left\vert\frac{\log (1+(\frac{n_\alpha}{\nu}-1)_+)}{\log (1+(\frac{\mu}{\nu}-1)_+)}\right\vert^r
{\mathbf 1}_{ \{n_\alpha\geq \nu\}}.
$$
This implies
$$
\int_\Omega {\mathbf 1}_{ \{n_\alpha\geq \mu\}} \,\mathrm{d} x \leq K_{r}^r \frac{({\mathbf E}_{\alpha}(\nu))^{r/2}}{(\log \mu -\log \nu)^r}
$$
and, from \eqref{energy-meth:step1}, we deduce
\begin{equation}\label{energy-meth:step2}
{\mathbb E}_{\electrons}(\mu)+ {\mathbb E}_{\holes}(\mu)\leq \zeta\frac{({\mathbb E}_{\electrons}(\nu)+ {\mathbb E}_{\holes}(\nu))^{r/q}}{(\log \mu -\log\nu)^{2r/q}} \quad \forall \mu>\nu\geq N^D,
\end{equation}
with
$$
\zeta= K_{r}^{2r/q}\left(  \frac{K_{q}^2 }{\lambda^4}
\bigl\||C|+|z_\ions|S_\ions\bigr\|_{L^p}^2
+ \frac{2}{N^D}\bigl\| G +r_0\bigr\|_{L^p}\right).
$$ 
to the function ${\mathcal E}= {\mathbb E}_\electrons \circ \exp + {\mathbb E}_\holes \circ \exp$ defined on $(\log(N^D),+\infty)$, nonnegative and nonincreasing. As it satisfies
$$
{\mathcal E}(y)\leq \zeta \frac{{\mathcal E}(x)^{\beta}}{(y-x)^{\alpha}} \quad \forall y>x\geq \log N^D,
$$
with $\beta=r/q>1$ and $\alpha=2r/q>0$, we obtain the existence of ${\overline N}>N^D$ such that ${\mathcal E}(\log({\overline N}))=0$ and, therefore, ${\mathbb E}_\electrons({\overline N})={\mathbb E}_\holes({\overline N})=0$. Using that from \eqref{energy-meth:step1}, one has $\mathcal{E}(\log(N^D))\leq 2\zeta|\Omega|^{2/q}/K_r^{2r/q}$, the upper bound can be taken explicitly as 
\[
{\overline N} = N^D \exp\left(K \left(\frac{\|C\|_{L^p} + |z_\ions|S_\ions}{\lambda^2} + \frac{\|G\|_{L^p}^{\frac12} + r_0^{\frac12}}{(N^D)^{\frac12}}\right)\right).
\]
with $K$ depending only on $p$ and $\Omega$. This concludes the proof of Proposition~\ref{prop:upper-bounds-n}.

\end{proof}

\subsubsection{Bounds on the potentials}\label{sec:boundspot}

\begin{proposition}\label{prop:bounds-potentials}
Under Assumption \ref{main_ass}, there exists explicit constants ${\overline M}_\psi$ and ${\overline M}_v$ such that any weak solution to \eqref{eq:model-dimless} in the sense of Definition \ref{def:weak-solution} verifies
\begin{subequations}
\begin{align}
-{\overline M}_{\psi}&\leq \psi\leq {\overline M}_{\psi},\label{Linf-bounds-psi}\\
-{\overline M}_v &\leq v_\alpha \leq {\overline M}_v, \mbox{ for } \alpha= \electrons,\holes.\label{Linf-bounds-v}
\end{align}
\end{subequations}
\end{proposition}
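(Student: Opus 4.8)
The plan is to establish the three families of bounds in the order in which their ingredients become available: first the $L^\infty$ bound on $\psi$, which needs only the upper bounds on $n_\electrons, n_\holes$ from Proposition~\ref{prop:upper-bounds-n} together with the a priori boundedness of $n_\ions$; then the upper bound on $v_\electrons, v_\holes$, read directly off the state equation \eqref{eq:state-eq-stationary}; and finally the lower bound on $v_\electrons, v_\holes$ via a maximum-principle argument that relies on the upper bound just obtained for the companion potential.

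For $\psi$ I would first note that the right-hand side $f = n_\holes - n_\electrons + z_\ions n_\ions + C$ of the weak Poisson equation \eqref{eq:weak-solution-DD-psi} is bounded in $L^p(\Omega)$: one has $|n_\holes - n_\electrons| \leq \overline N$ by Proposition~\ref{prop:upper-bounds-n}, $|z_\ions n_\ions| \leq |z_\ions| S_\ions$ since $0 < n_\ions < S_\ions$ by \eqref{hyp:statistics-a}, and $C \in L^p$ by \eqref{hyp:CG}, so $\|f\|_{L^p} \leq (\overline N + |z_\ions| S_\ions)|\Omega|^{1/p} + \|C\|_{L^p}$. I would then run a De Giorgi--Stampacchia truncation: testing \eqref{eq:weak-solution-DD-psi} with $(\psi - k)_+$ for $k \geq \|\psi^D\|_{L^\infty}$, which lies in $H^1_D(\Omega)$, yields the coercive identity $\lambda^2 \|\nabla (\psi-k)_+\|_{L^2}^2 = \int_\Omega f (\psi-k)_+$ with no boundary correction, precisely because one tests with $\psi$ itself rather than with $\psi - \psi^D$. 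Estimating the right-hand side by Hölder together with the embedding \eqref{eq:embed} and writing $a(k) = |\{\psi > k\}|$, I would reach a recursive inequality $a(h) \leq (K \|f\|_{L^p} / (\lambda^2 (h-k)))^{\gamma} a(k)^{\beta}$ for $h > k$, in which $\beta > 1$ exactly because $p > d/2$. The Stampacchia lemma recalled in the appendix then gives $a(\overline M_\psi) = 0$, that is $\psi \leq \overline M_\psi$ with $\overline M_\psi$ explicit; applying the same argument to $-\psi$, whose equation has the same $L^p$-bounded right-hand side up to sign, yields the matching lower bound.

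The upper bound on $v_\alpha$ is then immediate from the second identity in \eqref{eq:state-eq-stationary}: since $\mathcal{F}_\alpha^{-1}$ is increasing, $n_\alpha \leq \overline N$, and $|\psi| \leq \overline M_\psi$, one gets $v_\alpha = \mathcal{F}_\alpha^{-1}(n_\alpha) + z_\alpha \psi \leq \mathcal{F}_\alpha^{-1}(\overline N) + \overline M_\psi =: \overline M_v^+$ for $\alpha \in \{\electrons, \holes\}$. For the lower bound I would exploit the algebraic simplification that, since $z_\electrons = -z_\holes = -1$, the potential $\psi$ cancels in the recombination exponent, so that $R = \tilde r(n_\electrons, n_\holes)(\exp(v_\electrons + v_\holes) - 1)$ with $\tilde r \geq 0$. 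Fixing $m = -\max(\|v_\electrons^D\|_{L^\infty}, \|v_\holes^D\|_{L^\infty}, \overline M_v^+)$, I would test \eqref{eq:weak-solution-vnp} with $u_\alpha = -(m - v_\alpha)_+$, which belongs to $H^1_D(\Omega)$ because $m \leq -\|v_\alpha^D\|_{L^\infty}$. On $\{v_\alpha < m\}$ the upper bound on the companion potential forces $v_\electrons + v_\holes < m + \overline M_v^+ \leq 0$, hence $R \leq 0$ and $G - R \geq 0$; the identity then reads $\int_{\{v_\alpha < m\}} n_\alpha |\nabla v_\alpha|^2 = \int_{\{v_\alpha < m\}} (G - R)(v_\alpha - m) \leq 0$. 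Since $n_\alpha > 0$ by Lemma~\ref{lem:equiv_density_potentials}, the function $(m - v_\alpha)_+$ has vanishing gradient, is therefore constant on the connected domain $\Omega$, and vanishes because of its zero trace on $\Gamma^D$; thus $v_\alpha \geq m$, and setting $\overline M_v = \max(\overline M_v^+, -m)$ gives \eqref{Linf-bounds-v}.

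I expect the De Giorgi iteration for $\psi$ to be the main technical obstacle, in particular checking that $p > d/2$ is exactly the threshold that makes the iteration exponent satisfy $\beta > 1$ so that the scheme terminates at a finite level. The more delicate conceptual point, however, is the ordering of the argument: the lower bound on $v_\alpha$ cannot be obtained from the state equation directly, as that would require a lower bound on $n_\alpha$ not yet available, and instead rests on the favorable sign of the recombination term, which itself becomes usable only after the upper bound on the partner potential has been established.
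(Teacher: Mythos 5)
Your proposal is correct and follows essentially the same route as the paper: a Stampacchia-type truncation argument for the $L^\infty$ bound on $\psi$ (the paper likewise replicates its Section~\ref{sec:upper} iteration for the linear Poisson equation with $L^p$ right-hand side), the state equation \eqref{eq:state-eq-stationary} for the upper bound on $v_\alpha$, and a test-function/sign-of-recombination argument for the lower bound, where your test function $-(m-v_\alpha)_+$ is just the negative of the paper's $(v_\alpha+\overline{M}_v)_-$ and your observation that $v_\electrons+v_\holes<0$ on $\{v_\alpha<m\}$ is the same cancellation $\exp(v_\electrons+v_\holes)\leq\exp(v_\electrons+\overline{M}_v)\leq 1$ used there. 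The only differences are cosmetic (explicit level-set bookkeeping for the De Giorgi iteration, sign conventions, and inclusion of the boundary values $\|v_\alpha^D\|_{L^\infty}$ in the constant, which the paper also does).
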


\begin{proof}
Due to Proposition~\ref{prop:upper-bounds-n}, the assumptions $C\in L^p(\Omega)$ with $p>d/2$ and $\mathcal{F}_\ions \in L^{\infty}(\mathbb{R})$, we know that the right-hand-side of the Poisson equation is bounded in $L^p(\Omega)$. If $p=\infty$, then the bounds \eqref{Linf-bounds-psi} follow from the maximum principle. If $p$ is merely such that  $p>d/2$ the same estimates as in the previous Section~\ref{sec:upper}, for the much simpler Poisson equation $-\Delta \psi = f\in L^p(\Omega)$ can be replicated to obtain \eqref{Linf-bounds-psi}.

Then, the state equations \eqref{eq:state-eq-stationary} imply that for $\alpha= \electrons,\holes$,
\begin{equation}
v_\alpha \leq {\mathcal F}_\alpha^{-1}({\overline N}) + {\overline M}_\psi,
\end{equation}
and we can set  ${\overline M}_v=\max\{{\mathcal F}_{\electrons}^{-1}({\overline N}) + {\overline M}_\psi,\, {\mathcal F}_{\holes}^{-1}({\overline N}) + {\overline M}_\psi,\, \Vert  v_\electrons^D\Vert_{L^\infty},\,  \Vert  v_\holes^D\Vert_{L^\infty}\}$.

It remains to show that these potentials are bounded from below. Therefore, we write \eqref{eq:weak-solution-vnp}  with  the test functions $(u_\electrons, u_\holes)\in (H^1_D(\Omega))^2$ defined by
$$
u_{\alpha}= (v_\alpha+ {\overline M}_v)_{-}= - (v_\alpha+ {\overline M}_v) {\mathbf 1}_{\{v_{\alpha}+{\overline M}_v\leq 0 \}} \mbox{ for } \alpha = \electrons, \holes.
$$
On the one hand, we have, for $\alpha= \electrons,\holes$,
\begin{equation}\label{eq:lhs-uv}
\int_\Omega n_\alpha \nabla v_\alpha \cdot \nabla u_\alpha \,\mathrm{d} x =-\int_\Omega n_\alpha \vert \nabla u_\alpha \vert^2 \,\mathrm{d} x,
\end{equation}
while, on the other hand,
$$
\int_\Omega (G-R(n_\electrons, n_\holes))u_\alpha \,\mathrm{d} x\geq \int_\Omega \tilde{r}(n_\electrons, n_\holes)(\exp(v_\electrons+v_\holes)-1) (v_\alpha+ {\overline M}_v){\mathbf 1}_{\{v_{\alpha}+ {\overline M}_v\leq 0\}} \,\mathrm{d} x.
$$
But, $\exp(v_\electrons+v_\holes)=\exp(v_\electrons+{\overline M}_v)\exp(v_\holes-{\overline M}_v)\leq \exp(v_\electrons+{\overline M}_v)$. This implies
$$
(\exp(v_\electrons+v_\holes)-1) (v_\electrons+ {\overline M}_v){\mathbf 1}_{\{v_{\electrons}+ {\overline M}_v\leq 0\}}\geq 0.
$$
Therefore, we get, for $\alpha= \electrons,\holes$,
\begin{equation}\label{eq:rhs-uv}
\int_\Omega (G-R(n_\electrons, n_\holes))u_\alpha \,\mathrm{d} x\geq 0
\end{equation}
and from \eqref{eq:lhs-uv} and \eqref{eq:rhs-uv} we deduce
$$
\int_\Omega n_\alpha \vert \nabla u_\alpha \vert^2 \,\mathrm{d} x =0,
$$
which yields, since $n_\alpha>0$ and $u_\alpha\in H^1_D(\Omega)$, that $u_\alpha=0$ and $v_\alpha \geq -{\overline M}_v$ and concludes the proof of Proposition~\ref{prop:bounds-potentials}.
\end{proof}

It remains to prove the bounds on $v_\ions$. For this, we start with a preliminary well-posedness result concerning the nonlinear Poisson equation \eqref{eq:weak-solution-DD-psi} with mass constraint \eqref{boundary:ions}. The proof follows from elementary calculus of variation arguments, which are inspired by \cite{cances2025convergence}. This lemma will also be useful in the next section.
\begin{lemma}\label{ex:uniqueness:nonlinear:Poisson}
Under Assumption~\ref{main_ass}, for any $f\in L^p(\Omega)$ with $p\in[1,\infty]$ and $p>d/2$, and $\sigma\in[0,1]$,  there exists a unique weak solution $(v_\ions, \psi)$, with $v_\ions\in \mathbb{R}$ and $\psi-\sigma\psi^D \in H^1_D(\Omega)$,  to the nonlinear Poisson equation
\[
-\lambda^2\Delta\psi = \sigma(f + z_{\ions}\mathcal{F}_{\ions}(v_\ions - z_\ions \psi))\ \text{in}\ \Omega,\text{ and }
\psi = \sigma\psi^D\ \text{on}\ \Gamma^D,
\]
with the mass constraint
\begin{equation*}
M_\ions = \int_\Omega\mathcal{F}_{\ions}(v_\ions - z_\ions \psi) \,\mathrm{d} x,
\end{equation*}
and it satisfies the estimate
\[
\sigma|v_\ions| + \|\psi\|_{H^1}^2\leq \sigma K(1+\|\psi^D\|_{H^1}^2 + \|f\|_{L^p}^2),
\]
for some explicit constant $K>0$.
\end{lemma}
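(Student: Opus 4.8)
The plan is to recognise the pair $(\psi, v_\ions)$ as the unique critical point of a strictly convex functional, following the calculus of variations strategy of \cite{cances2025convergence}. I first introduce the primitive
\[
\Phi_\ions(s) = \int_{-\infty}^s \mathcal{F}_\ions(t)\,\mathrm{d} t, \qquad s\in\mathbb{R},
\]
which is well defined since $\mathcal{F}_\ions(t)\le e^t$ by \eqref{hyp:statistics-a}; it is nonnegative, $C^1$, convex (as $\Phi_\ions' = \mathcal{F}_\ions$ is increasing), globally $S_\ions$-Lipschitz, and satisfies $\Phi_\ions(s)\to 0$ as $s\to-\infty$ and $\Phi_\ions(s)/s\to S_\ions$ as $s\to+\infty$. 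On the admissible set $\psi\in \sigma\psi^D + H^1_D(\Omega)$ and $\mu\in\mathbb{R}$, I define
\[
E(\psi,\mu) = \frac{\lambda^2}{2}\int_\Omega|\nabla\psi|^2\,\mathrm{d} x - \sigma\int_\Omega f\psi\,\mathrm{d} x + \sigma\int_\Omega\Phi_\ions(\mu - z_\ions\psi)\,\mathrm{d} x - \sigma\mu M_\ions.
\]
A direct computation of the Gateaux derivatives shows that $\partial_\psi E = 0$ (tested against $H^1_D(\Omega)$) is exactly the weak form of the nonlinear Poisson equation, while for $\sigma>0$ the condition $\partial_\mu E = 0$ is exactly the mass constraint $\int_\Omega\mathcal{F}_\ions(\mu - z_\ions\psi)\,\mathrm{d} x = M_\ions$. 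Thus a weak solution of the lemma is precisely a critical point of $E$, and since $E$ is convex and $C^1$, critical points coincide with global minimisers.

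\textbf{Existence via the direct method.} For fixed $\sigma\in(0,1]$ and $f$, I would minimise $E$ by the direct method. The functional is convex and strongly continuous on the reflexive space $\big(\sigma\psi^D + H^1_D(\Omega)\big)\times\mathbb{R}$ (the ionic term is continuous from $L^1(\Omega)$ because $\Phi_\ions$ is $S_\ions$-Lipschitz), hence weakly lower semicontinuous. The key remaining point is coercivity. The quadratic term together with $-\sigma\int f\psi$ controls $\|\nabla\psi\|_{L^2}$ after Young's inequality and the embedding \eqref{eq:embed}. To control $\mu$ I would apply Jensen's inequality to the convex $\Phi_\ions$, giving
\[
\int_\Omega\Phi_\ions(\mu - z_\ions\psi)\,\mathrm{d} x \ge |\Omega|\,\Phi_\ions\!\big(\mu - z_\ions\overline{\psi}\big), \qquad \overline{\psi}= \tfrac{1}{|\Omega|}\textstyle\int_\Omega\psi\,\mathrm{d} x,
\]
then bound $|\overline{\psi}|\le C(1+\|\nabla\psi\|_{L^2})$ by Poincar\'e and the boundary data and absorb the cross term into the quadratic part by Young. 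This reduces coercivity to that of the \emph{scalar} map $t\mapsto |\Omega|\Phi_\ions(t) - M_\ions t$, which tends to $+\infty$ as $t\to+\infty$ precisely because $\Phi_\ions(t)/t\to S_\ions > M_\ions/|\Omega|$, and as $t\to-\infty$ because $\Phi_\ions(t)\to 0$ while $-M_\ions t\to+\infty$. This is exactly where the compatibility condition \eqref{Mass:compatibility} enters, and it is the main obstacle of the argument. A weakly convergent minimising sequence then yields a minimiser $(\psi,v_\ions)$, whose Euler--Lagrange equations are the desired system. The degenerate case $\sigma=0$ is treated separately and explicitly: the Poisson equation forces $\psi=0$, and the scalar equation $|\Omega|\mathcal{F}_\ions(v_\ions)=M_\ions$ determines $v_\ions$ uniquely by monotonicity of $\mathcal{F}_\ions$.

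\textbf{Uniqueness.} Uniqueness follows from convexity. The set of minimisers is convex and $E$ is affine on it; since the second variation of $E$ in any direction $\phi\in H^1_D(\Omega)$ is $\lambda^2\|\nabla\phi\|_{L^2}^2$, strictly positive for $\phi\neq 0$ by the Poincar\'e inequality on $H^1_D(\Omega)$ (recall $|\Gamma^D|>0$), the $\psi$-component must be constant along the segment, so $\psi$ is unique. Restricting to that $\psi$, the map $\mu\mapsto E(\psi,\mu)$ has second derivative $\sigma\int_\Omega\mathcal{F}_\ions'(\mu - z_\ions\psi)\,\mathrm{d} x>0$ for $\sigma>0$, forcing $v_\ions$ to be unique as well.

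\textbf{The a priori estimate.} Finally, the quantitative bound is obtained directly from the equations, uniformly in $\sigma\in[0,1]$. Testing the weak Poisson equation with $\psi - \sigma\psi^D\in H^1_D(\Omega)$ and using $|\mathcal{F}_\ions|\le S_\ions$, together with H\"older, \eqref{eq:embed}, and Young's inequality (and $\sigma^2\le\sigma$), yields $\|\psi\|_{H^1}^2\le \sigma K(1+\|\psi^D\|_{H^1}^2+\|f\|_{L^p}^2)$. For the bound on $v_\ions$ (relevant only when $\sigma>0$), I would fix a threshold $\theta_0$ with $\mathcal{F}_\ions(\theta_0)\in(M_\ions/|\Omega|,\,S_\ions)$; if $v_\ions>\theta_0$, then on the set $\{z_\ions\psi\le v_\ions-\theta_0\}$ one has $\mathcal{F}_\ions(v_\ions - z_\ions\psi)\ge\mathcal{F}_\ions(\theta_0)$, while the complementary set has measure at most $z_\ions^2\|\psi\|_{L^2}^2/(v_\ions-\theta_0)^2$ by Chebyshev. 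Inserting this into the mass constraint and again invoking \eqref{Mass:compatibility} gives $(v_\ions-\theta_0)^2\le C|z_\ions|^2\|\psi\|_{L^2}^2$, and symmetrically from below, so that $|v_\ions|\le K(1+\|\psi\|_{L^2})$. Combining with the $H^1$ bound on $\psi$ and the elementary estimate $1+\sqrt{K X}\le 2\sqrt{K}\,(1+X)$ produces the weighted bound $\sigma|v_\ions|\le \sigma K(1+\|\psi^D\|_{H^1}^2+\|f\|_{L^p}^2)$, which added to the $\psi$-estimate gives the claim.
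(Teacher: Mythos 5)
Your proposal is correct, and its existence/uniqueness core is essentially the paper's own argument: your functional $E(\psi,\mu)$ coincides, up to additive constants depending only on the data, with the paper's $J(\hat{\psi},v)$ --- the paper merely folds the constraint term into the primitive by working with $\mathcal{G}_\ions(x)=\int_{-\infty}^x\mathcal{F}_\ions(y)\,\mathrm{d}y-\frac{M_\ions}{|\Omega|}x$, where you keep $-\sigma\mu M_\ions$ as a separate term --- and both proofs obtain coercivity in the scalar variable from the compatibility condition \eqref{Mass:compatibility} and uniqueness from strict convexity. Where you genuinely diverge is the a priori estimate: the paper reads it off the variational structure in one stroke, combining its coercivity lower bound (quadratic in $\|\nabla\hat{\psi}\|_{L^2}$, linear in $|v|$) with the single inequality $J(\hat{\psi},v)\leq J(0,0)$, whereas you re-derive it from the equations themselves, testing the weak Poisson equation with $\psi-\sigma\psi^D$ and then extracting the bound on $v_\ions$ from the mass constraint via a Chebyshev/measure argument that again uses both inequalities of \eqref{Mass:compatibility}. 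Both routes are valid and produce constants with the same dependencies; the paper's is shorter, while yours has the mild advantage of applying verbatim to any weak solution without invoking minimality, and your explicit treatment of the degenerate case $\sigma=0$ (where the functional loses strict convexity in the scalar direction and the mass constraint alone pins down $v_\ions$) is a point the paper's blanket strict-convexity claim glosses over.
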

\begin{proof} For $\hat{\psi}\in H^1_D(\Omega)$ and $v\in\mathbb{R}$, consider the functional
\[
J(\hat{\psi}, v) = \int_\Omega \left[\frac{\lambda^2}{2}|\nabla(\hat{\psi}+\sigma\psi^D)|^2 + \sigma \mathcal{G}_\ions(v-z_\ions(\hat{\psi}+\sigma\psi^D)) - \sigma\left( z_\ions\frac{M_\ions}{|\Omega|}+ f\right)\hat{\psi}\right] \,\mathrm{d} x,
\]
where
\[
\mathcal{G}_\ions(x) = \int_{-\infty}^x\mathcal{F}_\ions(y) \, \mathrm{d} y - \frac{M_\ions}{|\Omega|}x.
\]
Clearly, from \eqref{hyp:statistics-a}, $\mathcal{G}_\ions$ is a strictly convex function and one checks that $J$ is a  well-defined strictly convex functional with values in $\mathbb{R}\cup\{+\infty\}$, whose critical points coincide exactly with weak solutions to the nonlinear Poisson equation with mass constraint. Moreover, thanks to \eqref{hyp:statistics-a} and \eqref{Mass:compatibility}, there are $\mu, K>0$ such that
\[
S_\ions |x|  + K\geq \mathcal{G}_\ions(x)\geq \mu|x|-K \text{ for all }x\in\mathbb{R}.
\]
Therefore, using the lower bound as well as Young's, Hölder and Sobolev inequalities (observe that $H^1_D(\Omega)\subset L^{p'}(\Omega)$ under the hypotheses) one finds that
\[
J(\hat{\psi}, v)\geq \frac{\lambda^2}{4}\|\nabla\hat{\psi}\|^2_{L^2} + \sigma\mu|\Omega| |v| - \sigma K(1+\|f\|_{L^p}^2+\|\nabla\psi^D\|^2_{L^2}),
\]
for some constant $K>0$ depending only on $p$, $\Omega$, $\mathcal{F}_\ions$, $|z_\ions |$ and $M_\ions$. Therefore, $J$ is coercive and admits a unique minimizer which is the unique solution to the nonlinear Poisson equation with mass constraint. The estimate on the solution is obtained by noticing that for the minimizer one has $J(\hat{\psi}, v)\leq J(0,0)$.
\end{proof}

\begin{corollary}\label{cor:bound_va}
Under Assumption \ref{main_ass}, there exists an explicit constant ${\overline M}_\ions$ such that any weak solution to \eqref{eq:model-dimless} in the sense of Definition \ref{def:weak-solution} verifies
\begin{equation}
-{\overline M}_\ions \leq v_\ions \leq {\overline M}_\ions.\label{Linf-bounds-va}
\end{equation}
\end{corollary}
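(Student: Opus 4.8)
The plan is to exploit the mass constraint \eqref{boundary:ions} together with the $L^\infty$ bound on $\psi$ already established in Proposition~\ref{prop:bounds-potentials} and the strict monotonicity of $\mathcal{F}_\ions$. Since $v_\ions$ is a single real constant, the only mechanism coupling it to the rest of the system is the integral relation $\int_\Omega \mathcal{F}_\ions(v_\ions - z_\ions\psi)\,\mathrm{d}x = M_\ions$, so I would extract both bounds directly from this constraint rather than from any differential equation.

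First I would use $|\psi|\leq \overline{M}_\psi$ to obtain, for almost every $x\in\Omega$, the pointwise sandwich $v_\ions - |z_\ions|\overline{M}_\psi \leq v_\ions - z_\ions\psi(x) \leq v_\ions + |z_\ions|\overline{M}_\psi$, which is valid irrespective of the sign of $z_\ions$ since $|z_\ions\psi|\leq |z_\ions|\overline{M}_\psi$. Because $\mathcal{F}_\ions$ is increasing by \eqref{hyp:statistics-a}, applying it to this chain and integrating over $\Omega$ yields
\[
|\Omega|\,\mathcal{F}_\ions\bigl(v_\ions - |z_\ions|\overline{M}_\psi\bigr) \leq M_\ions \leq |\Omega|\,\mathcal{F}_\ions\bigl(v_\ions + |z_\ions|\overline{M}_\psi\bigr).
\]

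Next I would invert these two inequalities. The compatibility condition \eqref{Mass:compatibility} guarantees that $M_\ions/|\Omega| \in (0,S_\ions)$, which is exactly the range of $\mathcal{F}_\ions$, so $\mathcal{F}_\ions^{-1}(M_\ions/|\Omega|)$ is a well-defined finite number. Using that $\mathcal{F}_\ions^{-1}$ is increasing, the left inequality gives the upper bound $v_\ions \leq \mathcal{F}_\ions^{-1}(M_\ions/|\Omega|) + |z_\ions|\overline{M}_\psi$, while the right inequality gives the lower bound $v_\ions \geq \mathcal{F}_\ions^{-1}(M_\ions/|\Omega|) - |z_\ions|\overline{M}_\psi$. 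Hence one may take the explicit constant
\[
\overline{M}_\ions = \left|\mathcal{F}_\ions^{-1}\!\left(\frac{M_\ions}{|\Omega|}\right)\right| + |z_\ions|\,\overline{M}_\psi,
\]
which depends on the data only through the quantities listed in Theorem~\ref{thm:existence_weak_solution}.

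There is no genuine obstacle here: the corollary is a short consequence of the bound on $\psi$ combined with the boundedness and monotonicity of $\mathcal{F}_\ions$. The only point requiring care is that the argument $M_\ions/|\Omega|$ of $\mathcal{F}_\ions^{-1}$ must lie strictly inside $(0,S_\ions)$ for the bound to be finite, and this is precisely what the compatibility condition \eqref{Mass:compatibility} ensures — were $M_\ions$ allowed to reach $|\Omega|\,S_\ions$, the inverse statistics would blow up and no uniform bound on $v_\ions$ could be expected.
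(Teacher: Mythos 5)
Your proof is correct, and it takes a genuinely different route from the paper. The paper obtains this corollary as an immediate consequence of Lemma~\ref{ex:uniqueness:nonlinear:Poisson}: any weak solution's pair $(v_\ions,\psi)$ solves the nonlinear Poisson equation with mass constraint and right-hand side $f = n_\holes - n_\electrons + C$, which lies in $L^p(\Omega)$ thanks to the density bounds of Proposition~\ref{prop:upper-bounds-n}; the variational estimate $|v_\ions| \leq K\bigl(1 + \|\psi^D\|_{H^1}^2 + \|f\|_{L^p}^2\bigr)$ from that lemma (with $\sigma = 1$) then delivers the bound. You instead bypass the variational machinery entirely and argue pointwise: the mass constraint \eqref{boundary:ions}, the monotonicity of $\mathcal{F}_\ions$, the bound $|\psi|\leq \overline{M}_\psi$ from Proposition~\ref{prop:bounds-potentials}, and the compatibility condition \eqref{Mass:compatibility} sandwich $v_\ions$ between $\mathcal{F}_\ions^{-1}(M_\ions/|\Omega|) \mp |z_\ions|\overline{M}_\psi$. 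Your argument is more elementary and yields a cleaner, more transparent constant
\[
\overline{M}_\ions = \Bigl|\mathcal{F}_\ions^{-1}\Bigl(\tfrac{M_\ions}{|\Omega|}\Bigr)\Bigr| + |z_\ions|\,\overline{M}_\psi,
\]
which also makes explicit why \eqref{Mass:compatibility} is essential. What the paper's route buys in exchange is economy of means at the level of the whole article: Lemma~\ref{ex:uniqueness:nonlinear:Poisson} is needed anyway in Section~\ref{sec:existenceproof} (Step~1 of the fixed-point construction for the truncated system), so the corollary comes for free once the lemma is in place, and it uses only the $L^p$ bound on the densities rather than the $L^\infty$ bound on $\psi$. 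Both proofs are valid and both constants depend only on the admissible data.
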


\subsection{Existence proof}\label{sec:existenceproof}
We now prove the existence of weak solutions in the sense of Definition~\ref{def:weak-solution}. The proof is based on two intermediate steps. First, we consider a modified system of equations
based on some truncations of the densities and of the potentials. Then, we obtain the existence of a weak solution to the truncated system  in Proposition~\ref{prop:existence_trunc_system} as a consequence of the Leray-Schauder fixed point theorem. Second, we prove that any weak solution to the truncated system is indeed a weak solution to the initial system, for a sufficiently big level of truncation.

\subsubsection{Existence of a solution to a system involving truncations}

Let us start with the definition of different truncations. For $M>0$, we define $T_M:{\mathbb R}\to [M^{-1},M]$ and $S_M:{\mathbb R}\to [-M,M]$ as
$$
 T_M(x)=\left\{\begin{array}{ccl} \frac{1}{M} &\mbox{if}& x\leq  \frac{1}{M},\\[2.mm]
x &\mbox{if} &\frac{1}{M}\leq x\leq M,\\[2.5mm]
M &\mbox{if}& x\geq M,
\end{array}
\right.
\mbox{ and }
S_M(x)=\left\{\begin{array}{ccl} -M &\mbox{if}& x\leq  -M,\\[2.mm]
x& \mbox{if}& -M\leq x\leq M,\\[2.mm]
M &\mbox{if}& x\geq M.
\end{array}
\right.
$$
Then, we define the truncation of the diffusion coefficient function $D_\alpha$ as $D_{\alpha,M} :  {\mathbb R}\to {\mathbb R}$ satisfying
$$
D_{\alpha,M} (x)= D_\alpha(T_M(x)) \quad \forall x\in {\mathbb R}.
$$
A direct consequence of the lower bound of $D_\alpha$ is that $D_{\alpha,M}(x)\geq 1$ for all $x\in {\mathbb R}$. Let us now introduce the $C^1$-diffeomorphism ${\mathcal F}_{\alpha,M} : {\mathbb R}\to {\mathbb R}$ defined through its reciprocal function by
\[
{\mathcal F}_{\alpha,M}^{-1}(x) = \int_1^x({\mathcal F}_{\alpha}^{-1})' (T_M(y)) \, \mathrm{d} y \quad \forall x\in {\mathbb R}.
\]
This function is such that
\begin{equation}\label{prop_DM}
D_{\alpha,M}(x)=T_M(x) ({\mathcal F}_{\alpha,M}^{-1})' (x)  \quad \forall x\in {\mathbb R}.
\end{equation}
We finally define the truncation of the recombination-generation term $R$ by
\begin{equation}\label{def_RM}
R_M(x,y,z)= \tilde{r}(T_M(x), T_M(y))\left(e^{ S_M({\mathcal F}_{\electrons,M}^{-1}(x)-z)}e^{S_M({\mathcal F}_{\holes,M}^{-1}(y)+z)}-1\right)\quad  \forall (x,y,z)\in {\mathbb R}^3.
\end{equation}
Observe that because of the truncation $S_M$ we introduce a third argument $z$ dedicated to the electric field.

\begin{proposition}\label{prop:existence_trunc_system}
Under Assumption~\ref{main_ass}, for all $M>0$,  there exists $({\psi}^M,{ n}_{\electrons}^M,{n}_{\holes}^M, v_\ions^M)$ such that
\[(\psi^M-\psi^D, n_\electrons^M- n_\electrons^D, n_\holes^M- n_\holes^D)\in (H^1_D(\Omega))^3\ \text{ and }\ v_\ions^M\in\mathbb{R},\]
the mass constraint \eqref{boundary:ions} is satisfied by $(v_{\ions}^M, \psi^M)$, that is
\begin{equation*}
\int_\Omega \mathcal{F}_\ions \left(v_{\ions}^M- z_\ions \psi^M\right) \,\mathrm{d} x \,=\, M_{\ions}\,,
\end{equation*}
and, for all $({\phi},{u}_\electrons,{u}_\holes)\in (H^1_D(\Omega))^3$ and $\alpha\in\{\electrons,\holes\}$, $({\psi}^M,{ n}_{\electrons}^M,{n}_{\holes}^M, v_\ions^M)$ satisfies the weak formulation of the truncated system
\begin{subequations}\label{def:trunc_weak_solution}
\begin{align}
&\lambda^2\int_\Omega\nabla \psi^M\cdot \nabla \phi \,\mathrm{d} x = \int_\Omega \left(T_M(n_\holes^M)-T_M(n_\electrons^M)  +\, z_{\ions}\,\mathcal{F}_\ions \left(v_{\ions}^M- z_\ions \psi^M\right) +C\right)\phi \,\mathrm{d} x, \label{def:trunc_weak_solution_psi}\\
& \int_\Omega (D_{\alpha,M}(n_\alpha^M)\nabla n_\alpha^M+z_\alpha\,T_M(n_\alpha^M)\nabla \psi^M)\cdot \nabla u_\alpha \,\mathrm{d} x =\int_{\Omega} (G-R_M(n_\electrons^M,n_\holes^M,\psi^M))u_\alpha \,\mathrm{d} x.\label{def:trunc_weak_solution_nalpha}
\end{align}
\end{subequations}

\end{proposition}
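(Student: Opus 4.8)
The plan is to obtain $(\psi^M, n_\electrons^M, n_\holes^M, v_\ions^M)$ as a fixed point of a compact map, following the Leray--Schauder (Schaefer) alternative with a homotopy parameter $\sigma\in[0,1]$. The truncations are precisely what make this tractable: $T_M$ takes values in $[1/M,M]$, so $D_{\alpha,M}=D_\alpha\circ T_M$ is continuous, bounded, and bounded below by $1$; the drift coefficients $T_M(\cdot)$ are bounded; and $R_M$ is bounded thanks to the extra truncation $S_M$. Consequently, once the electric potential is known, the two carrier equations \eqref{def:trunc_weak_solution_nalpha} are uniformly elliptic with bounded data, while the Poisson part \eqref{def:trunc_weak_solution_psi} together with the mass constraint \eqref{boundary:ions} is exactly the nonlinear problem solved in Lemma~\ref{ex:uniqueness:nonlinear:Poisson}.

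I would decouple as follows. Working in $X=L^2(\Omega)^2$, given a pair $(\bar n_\electrons,\bar n_\holes)\in X$ and $\sigma\in[0,1]$, I first set $f=T_M(\bar n_\holes)-T_M(\bar n_\electrons)+C$ and invoke Lemma~\ref{ex:uniqueness:nonlinear:Poisson} to produce the unique $(v_\ions,\psi)$ solving the $\sigma$-scaled nonlinear Poisson equation with mass constraint. I then solve the two \emph{linear} elliptic problems obtained by freezing the coefficients and right-hand sides at $(\bar n_\electrons,\bar n_\holes,\psi)$, namely
\[
\int_\Omega \bigl( D_{\alpha,M}(\bar n_\alpha)\nabla n_\alpha + z_\alpha T_M(\bar n_\alpha)\nabla\psi\bigr)\cdot\nabla u_\alpha \,\mathrm{d} x = \sigma\int_\Omega \bigl(G - R_M(\bar n_\electrons,\bar n_\holes,\psi)\bigr) u_\alpha \,\mathrm{d} x,
\]
with $n_\alpha-\sigma n_\alpha^D\in H^1_D(\Omega)$; these are well posed by Lax--Milgram since $D_{\alpha,M}\geq 1$. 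The resulting map $\Phi_\sigma:(\bar n_\electrons,\bar n_\holes)\mapsto(n_\electrons,n_\holes)$ has a fixed point satisfying \eqref{def:trunc_weak_solution} at $\sigma=1$, because then $\bar n_\alpha=n_\alpha$ restores $D_{\alpha,M}(n_\alpha)$, $T_M(n_\alpha)$ and $R_M(n_\electrons,n_\holes,\psi)$ in their correct places. At $\sigma=0$ the Poisson solve gives $\psi\equiv 0$ and the carrier equations force $n_\alpha\equiv 0$, so $\Phi_0\equiv 0$, as required by the homotopy method.

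Compactness and continuity follow from elliptic regularity combined with the Rellich embedding. Both solve steps output functions in $H^1(\Omega)$, and the embedding $H^1(\Omega)\hookrightarrow\hookrightarrow L^2(\Omega)$ is compact, so $\Phi_\sigma$ maps bounded sets of $X$ into relatively compact sets. For joint continuity in $(\bar n_\electrons,\bar n_\holes,\sigma)$, I would use that $T_M$, $D_{\alpha,M}$ and $R_M$ are bounded and continuous, so that $L^2$-convergence of the frozen arguments yields convergence of the coefficients and sources by dominated convergence, after which one passes to the limit in the linear problems; the continuous dependence of the Poisson solution on $f$ and $\sigma$ follows from the strict convexity of the variational functional in Lemma~\ref{ex:uniqueness:nonlinear:Poisson} and the monotonicity of $\mathcal{F}_\ions$.

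Finally, I would establish the a priori bound required by the Leray--Schauder alternative, uniformly over fixed points of $\Phi_\sigma$ and over $\sigma\in[0,1]$. The Poisson estimate of Lemma~\ref{ex:uniqueness:nonlinear:Poisson} controls $\|\psi\|_{H^1}$ and $\sigma|v_\ions|$ uniformly, since $\|f\|_{L^p}\leq M|\Omega|^{1/p}+\|C\|_{L^p}$ is bounded by construction of the truncation. Testing the carrier equations with $n_\alpha-\sigma n_\alpha^D\in H^1_D(\Omega)$, using $D_{\alpha,M}\geq 1$ for coercivity together with the uniform bounds on $T_M\leq M$, $\|\nabla\psi\|_{L^2}$, $R_M$, $\|G\|_{L^p}$ and $\|n_\alpha^D\|_{H^1}$, and combining with Poincaré \eqref{eq:embed} and Young's inequality, yields $\|n_\alpha\|_{H^1}\leq C(M)$ independently of $\sigma$, hence an $X$-bound on the fixed points. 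The Leray--Schauder theorem then produces a fixed point of $\Phi_1$, which is the sought weak solution of the truncated system. I expect the main obstacle to be the careful verification of continuity of $\Phi_\sigma$ — in particular the stable dependence of the nonlinear Poisson solution on the frozen densities and on $\sigma$ — rather than the a priori bounds, which are rendered straightforward by the truncations.
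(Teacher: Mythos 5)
Your proposal is correct and follows essentially the same route as the paper: the same decoupled map (nonlinear Poisson with mass constraint via Lemma~\ref{ex:uniqueness:nonlinear:Poisson}, then Lax--Milgram for the frozen linear carrier equations with the $\sigma$-scaled Dirichlet lift $n_\alpha-\sigma n_\alpha^D$ and source), the same compactness via $H^1$ bounds and Rellich, the same $\sigma$-uniform a priori estimate on fixed points, and the Leray--Schauder theorem applied at $\sigma=1$. The only difference is presentational: you spell out the continuity of the map in more detail than the paper does, which is a welcome refinement rather than a deviation.
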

\begin{proof}
In order to prove Proposition \ref{prop:existence_trunc_system}, we introduce an application $\tau : L^2(\Omega)^2\times [0,1]\to L^2(\Omega)^2$  such that $\tau ({\bar n}_\electrons,{\bar n}_\holes, \sigma)= (n_\electrons, n_\holes)$. This application is defined in two steps.

{\em Step 1}:  First, thanks to Lemma~\ref{ex:uniqueness:nonlinear:Poisson}, for a given $({\bar n}_\electrons,{\bar n}_\holes, \sigma)$, there is a unique function $\psi={\widehat \psi}+\sigma \psi^D \in H^1(\Omega)$ such that $\hat{\psi}\in H^1_D(\Omega)$ and a unique $v_\ions\in\mathbb{R}$ solutions to the following problem:  for all $\phi\in H^1_D(\Omega)$,
\begin{equation}\label{weak_form_psihat}
\lambda^2\int_\Omega\nabla \psi\cdot \nabla \phi \,\mathrm{d} x= \sigma \int_\Omega \left(T_M(\bar{n}_\holes)-T_M(\bar{n}_\electrons)  +\, z_{\ions}\,\mathcal{F}_\ions \left(v_{\ions}- z_\ions \psi\right) +C\right)\phi \,\mathrm{d} x,
\end{equation}
with the mass constraint
\begin{equation}\label{boundary:ions:lemma}
\int_\Omega \mathcal{F}_\ions \left(v_{\ions}- z_\ions \psi\right) \,\mathrm{d} x \,=\, M_{\ions}\,.
\end{equation}
Furthermore, there is $K$ independent of $\sigma\in [0,1]$ , such that
\begin{equation}\label{H1bound_psi}
\|\nabla {\psi}\|_{L^2} \leq K.
\end{equation}

{\em Step 2}: Applying the Lax-Milgram theorem, we obtain the existence and uniqueness of ${\widehat n}_\electrons \in H^1_D(\Omega)$ and ${\widehat n}_\holes \in H^1_D(\Omega)$ such that, defining $n_\electrons ={\widehat n}_\electrons + \sigma n^D_\electrons$ and $n_\holes ={\widehat n}_\holes + \sigma n^D_\holes$, one has for all $u_\electrons, u_\holes \in H^1_D(\Omega)$ that
\begin{multline}\label{weak_form_n}
\int_\Omega (D_{\electrons,M}({\bar n}_\electrons) \nabla {n}_\electrons \cdot\nabla u_\electrons + D_{\holes,M}({\bar n}_\holes) \nabla {n}_\holes \cdot\nabla u_\holes) \,\mathrm{d} x=\\
+\int_\Omega \nabla \psi \cdot \left( T_M({\bar n}_\electrons) \nabla u_\electrons-T_M({\bar n}_\holes) \nabla u_\holes\right) \,\mathrm{d} x
+\sigma \int_\Omega (G-R_M({\bar n}_\electrons,{\bar n}_\holes,\psi))(u_\electrons+u_\holes) \,\mathrm{d} x.
\end{multline}
 Using $u_\electrons={\widehat n}_\electrons$ and $u_\holes={\widehat n}_\holes$ as test functions in \eqref{weak_form_n} and once again classical inequalities (observe for the generation term that, again, $H^1_D(\Omega)\subset L^{p'}(\Omega)$ under the hypotheses), we obtain the existence of $K$ which depends (in particular) on $M$ but not on $\sigma\in [0,1]$ such that
\begin{equation}\label{H1bound_n}
\Vert\nabla {n}_\electrons \Vert_{L^2}^2 + \Vert \nabla {n}_\holes \Vert_{L^2}^2  \leq K \left( \|\nabla {\psi}\|_{L^2}^2 +1\right).
\end{equation}

The mapping $\tau :  ({\bar n}_\electrons,{\bar n}_\holes, \sigma)\in  L^2(\Omega)^2\times [0,1]\to  (n_\electrons, n_\holes) \in L^2(\Omega)^2$ is well-defined through Step~1 and Step~2. It satisfies $\tau({\bar n}_\electrons,{\bar n}_\holes, 0)= 0$.
Moreover, we deduce from \eqref{H1bound_n}, combined with \eqref{H1bound_psi}, that $\tau$ is a compact mapping from $L^2(\Omega)^2\times [0,1]$ to $L^2(\Omega)^2$ and that there exists a constant $K$ such that $\Vert n_\electrons\Vert_{L^2}, \Vert n_\holes\Vert_{L^2}\leq K$ for all $(n_\electrons,n_\holes,\sigma)\in L^2(\Omega)^2\times [0,1]$ satisfying $\tau (n_\electrons,n_\holes,\sigma)=(n_\electrons,n_\holes)$.
Therefore, the Leray-Schauder fixed point theorem (see Theorem 11.6 in \cite{Gilbarg-Trudinger}) yields the existence of a fixed point
$(n_\electrons^M,n_\holes^M)\in (L^2(\Omega))^2$ to $\tau (\cdot,\cdot,1)$. Moreover, ${\psi}^M\in H^1(\Omega)$ is  obtained as a by-product of Step 1: ${\psi}^M={\widehat \psi} +\psi^D$  where ${\widehat \psi}\in H^1_D(\Omega)$ and $v_\ions^M\in\mathbb{R}$ are defined by \eqref{weak_form_psihat}-\eqref{boundary:ions:lemma} with $\sigma =1$ and $ (n_\electrons^M,n_\holes^M)$ instead of ${\bar n}_\electrons, {\bar n}_\holes$ in the right-hand-side.
The densities $n_\electrons^M$, $n_\holes^M$ belong to $H^1(\Omega)$ and  ${\widehat n}_\electrons=n_\electrons^M-n_\electrons^D$, ${\widehat n}_\holes = n_\holes^M -n_\holes^D$ belong to $H^1_D(\Omega)$. Noticing that $(\psi^M, n_\electrons^M, n_\holes^M)$ satisfies \eqref{def:trunc_weak_solution}, this ends  the proof of Proposition \ref{prop:existence_trunc_system}.
\end{proof}

Knowing $({\psi}^M,{ n}_\electrons^M,{n}_\holes^M,v_\ions^M)$ is a solution to the truncated system  \eqref{def:trunc_weak_solution}, we can define some associate quasi-Fermi potentials as
\begin{equation}\label{def_trunc_QFpot}
v_\electrons^M={\mathcal F}_{\electrons,M}^{-1}(n_\electrons^M)-\psi^M,\quad  v_\holes^M={\mathcal F}_{\holes,M}^{-1}(n_\holes^M)+\psi^M.
\end{equation}
We have that $(v_\electrons^M, v_\holes^M) \in H^1(\Omega)^2$,
$ (v_\electrons^M- ( {\mathcal F}_{\electrons,M}^{-1}(n_\electrons ^D)-\psi^D), v_\holes^M- ( {\mathcal F}_{\holes,M}^{-1}(n_\holes ^D)+\psi^D))\in H^1_D(\Omega)^2$ and for all $(u_\electrons, u_\holes)\in (H^1_D(\Omega))^2$,
\begin{subequations}\label{trunc_weak_solution_QFpot}
\begin{align}
& \int_\Omega (T_M(n_\alpha^M)\nabla v_\alpha^M)\cdot \nabla u_\alpha \,\mathrm{d} x =\int_{\Omega} (G-R_M(n_\electrons^M,n_\holes^M,\psi^M))u_\alpha \,\mathrm{d} x, \mbox{ for } \alpha=\electrons,\holes.\label{def:trunc_weak_solution_QFalpha}
\end{align}
\end{subequations}

\subsubsection{Proof of Theorem \ref{thm:existence_weak_solution}}

The \emph{a priori} bounds on weak solutions follow from Proposition \ref{prop:upper-bounds-n}, Proposition \ref{prop:bounds-potentials} and Corollary~\ref{cor:bound_va}. Observe that the bounds on the potentials also yield uniform lower bounds on the densities.
It remains to show the existence part.
To proceed, we will now establish $L^{\infty}$ bounds on the solutions to the truncated system $(\psi^M, n_\electrons^M, n_\holes^M, v_\ions^M)$ defined by \eqref{def:trunc_weak_solution} and to the associated quasi-Fermi potential $(v_\electrons^M, v_\holes^M)$ defined by \eqref{def_trunc_QFpot}.
The crucial point consists in proving that these $L^{\infty}$ bounds do not depend on $M$ for $M$ sufficiently large, so that a solution to the truncated system is indeed a solution to the initial system. In order to obtain the $L^{\infty}$-estimates for the solutions to the truncated system, we follow the main lines of Section \ref{sec:apriori}.

{\em Step 1.} Upper bound on the densities

In order to establish an upper bound on the densities, we follow the lines of the Proof of Proposition \ref{prop:upper-bounds-n}.
The functions  ${\mathbb E}_{\alpha}^M: [N^D,+\infty)\to{\mathbb R}_+$ are defined by substituting  $n_\alpha^M$ to  $n_\alpha$ in the definition \eqref{def:E_alpha} of ${\mathbb E}_\alpha$. The auxiliary functions are now denoted by $w_{\nu,\alpha}^M=n_\alpha^M/\nu-1$.

For $\nu\geq N^D$, we follow the computations leading to \eqref{energy-meth:ineq3}, which yields
\begin{multline}
 {\mathbb E}_{\electrons}^M(\nu)+ {\mathbb E}_{\holes}^M(\nu)\leq \int_\Omega \nabla \psi\cdot \left(\frac{T_M(\nu(w_{\nu,\electrons}^M+1))}{\nu} \nabla \varphi(w_{\electrons}^M)-\frac{T_M(\nu(w_{\nu,\holes}^M+1)}{\nu}\nabla \varphi(w_{\nu,\holes}^M)\right) \,\mathrm{d} x \\
+\frac{\| G +r_0\|_{L^p}}{N^D} \left(\left\|{\mathbf 1}_{\{ n_{\electrons}^M\geq \nu\}}\right\|_{L^{p'}} + \left\|{\mathbf 1}_{\{ n_{\holes}^M\geq \nu\}}\right\|_{L^{p'}}\right).\label{energy-meth:ineq3-M}
\end{multline}
Now, we introduce the function $h_\nu^{M}$ defined by
$$
h_\nu^{M} (s)=\int_0^s \frac{T_M(\nu(w+1))}{\nu}\varphi'(w) \, {\rm d}w \quad \forall s\in{\mathbb R}.
$$
It is a non-decreasing and non-negative function that vanishes on $(-\infty,0]$, so that we can obtain
the inequality equivalent to \eqref{energy-meth:ineq4}:
\begin{multline}
 {\mathbb E}_{\electrons}^M(\nu)+ {\mathbb E}_{\holes}^M(\nu)
\leq \frac{\| G +r_0\|_{L^p}}{N^D} \left(\left\|{\mathbf 1}_{\{ n_{\electrons}^M\geq \nu\}}\right\|_{L^{p'}} + \left\|{\mathbf 1}_{\{ n_{\holes}^M\geq \nu\}}\right\|_{L^{p'}}\right) \\
+\frac{1}{\lambda^{2}}\int_\Omega \left( C
+z_{\ions}\,\mathcal{F}_\ions \left(v_{\ions}^M- z_\ions \psi^M\right)
\right)(h_\nu^{M}(w_{\nu,\electrons}^M)-h_\nu^{M}(w_{\nu,\holes}^M)) \,\mathrm{d} x.\label{energy-meth:ineq4-M}
\end{multline}
But, we also observe that if $M\geq \nu^{-1}$, then for all $w\geq 0$, $\nu(w+1)\geq M^{-1}$ which yields that
$$
\frac{T_M(\nu(w+1))}{\nu}\leq (w+1),\ \text{for all } w\geq 0.
$$
This implies that $h_\nu^{M}(s)\leq \log (1+s_+)$ for all $s\in{\mathbb R}$. Therefore, we obtain an inequality similar to \eqref{energy-meth:step1}: for $\nu\geq N^D$ and $M\geq 1/\nu$,
\begin{align*}
{\mathbb E}_{\electrons}^M(\nu)+& {\mathbb E}_{\holes}^M(\nu)\leq \\
&\left( \frac{K_{q}^2 }{\lambda^4}
\left\||C|+|z_\ions|S_\ions\right\|_{L^p}^2
+ \frac{2}{N^D}\| G +r_0\|_{L^p}\right)
\left(\left\|{\mathbf 1}_{\{ n_{\electrons}^M\geq \nu\}}\right\|_{L^{p'}} + \left\|{\mathbf 1}_{\{ n_{\holes}^M\geq \nu\}}\right\|_{L^{p'}}\right).
\end{align*}
Finally, if $M\geq \frac{1}{\overline N}$, with ${\overline N}$ defined in Proposition~\ref{prop:upper-bounds-n}, we can follow the end of the proof of this proposition, so that we obtain
\begin{equation}\label{upper-bounds-n-M}
0<n_\electrons^M, n_\holes^M\leq {\overline N} \mbox { almost everywhere in } \Omega.
\end{equation}

{\em Step 2.} Bounds on the potentials

A direct consequence of \eqref{upper-bounds-n-M} is that the bounds on $\psi^M$ are the same as the bounds on $\psi$, if $M\geq \frac{1}{\overline N}$:
$$
-{\overline M}_\psi\leq \psi^M\leq {\overline M}_\psi.
$$
Let us now choose $M$ that satisfies also $M\geq {\overline N}$, so that ${\mathcal F}_{\alpha,M}^{-1}(n^M_\alpha)= {\mathcal F}_\alpha^{-1}(n^M_\alpha)$ for
$\alpha=\electrons,\holes$. Then, we deduce that
$$
v_\alpha^M\leq {\overline M}_v, \mbox{ for }\alpha =\electrons, \holes,
$$
with ${\overline M}_v$ defined in the proof of Proposition \ref{prop:bounds-potentials}.

Next, we prove that the quasi-Fermi potentials $v_\alpha^M$ are also bounded from below. Using  $u_\electrons=(v_\electrons^M+{\overline M}_v)_-$ as a test function in  \eqref{def:trunc_weak_solution_QFalpha} for $\alpha=\electrons$, we have that
\begin{align*}
-\int_\Omega T_M(n_\electrons^M)\vert \nabla u_\electrons\vert^2 \,\mathrm{d} x&=\int_\Omega (G-R_M(n_\electrons^M,n_\holes^M,\psi^M))u_{\electrons}\\
&\geq\int_\Omega \tilde{r}(T_M(n_\electrons^M,n_\holes^M))(e^{S_M(v_\electrons^M)+S_M(v_\holes^M)} -1)(v_\electrons^M+{\overline M}_v){\mathbf 1}_{v_\electrons^M+{\overline M}_v\leq 0} \,\mathrm{d} x.
\end{align*}
But as soon as $M\geq {\overline M}_v$, we have:
\begin{align*}
    \exp(S_M(v_\holes^M)-{\overline M}_v) &\leq 1,\\
    \exp(S_M(v_\electrons^M)+{\overline M}_v) &\leq  1 \mbox{ if } v_\electrons^M+{\overline M}_v\leq 0.
\end{align*}
 We deduce, as in the end of the proof of Proposition \ref{prop:bounds-potentials} that $u_\electrons=0$ and, therefore, that $v_\electrons^M\geq {-{\overline M}_v}$. The proof is similar for $v_\holes^M$.

 {\em Conclusion.} We have obtained that for $M$ satisfying
 $$
 M\geq \max ({\overline N}^{-1}, {\overline N},  {\overline M}_v),
 $$
 with ${\overline N}$ and $ {\overline M}_v$ respectively defined in Proposition \ref{prop:upper-bounds-n} and Proposition \ref{prop:bounds-potentials}, the solution to the truncated system $(\psi^M,n_\electrons^M, n_\holes^M)$ satisfies bounds that do not depend on $M$.
 This means that the solution to the truncated system defined by \eqref{def:trunc_weak_solution} is a solution to the initial system \eqref{eq:model-dimless}.

\begin{remark}[Multi-layer devices] \label{rem:intrinsic} The model \eqref{eq:model-dimless} is not general enough to describe some typical perovskite solar cell device.
Indeed, generally, one has a multi-layer domain with mobile anion species only in some layers $\Omega_{\text{PVK}}\subset \Omega$ (see Section~\ref{sec:Sim-PSC} for an example). In order to include this type of devices into the modelling framework, for the stationary problem at hand here, it amounts to introduce some characteristic function in the Poisson equation 
\[
- \lambda^2\Delta\psi = n_{\holes} - n_{\electrons} + z_{\ions}\mathcal{F}_\ions \left(v_\ions- z_\ions \psi\right)\mathbf{1}_{\Omega_{\text{PVK}}}(x) + C.
\]
We leave the reader to check that the analysis can be extended easily to this type (and more general) settings.
\end{remark}

\subsubsection{Proof of Corollary \ref{coro:bound:current}}\label{proof:cor}
To prove Corollary \ref{coro:bound:current}, we consider a solution $(v_\ions, v_\electrons,v_\holes,\psi)$ to \eqref{eq:model-dimless} and fix some $B\subset \partial\Omega$ which satisfies $\overline{B} \cap \overline{B^c\cap \Gamma^D}=\emptyset$. We first prove that $I_B$ in \eqref{outward:current} is well defined. To do so, we point out that $j_\alpha$, $\alpha\in\{\electrons,\holes\}$, belongs to $L^2(\Omega)$ by taking the $L^2$ norm in \eqref{outward:current} and applying
Theorem \ref{thm:existence_weak_solution}, which guarantees that $(n_\electrons,n_\holes)$ (resp. $(v_\electrons,v_\holes)$) is explicitly bounded in $L^\infty(\Omega)$ (resp. $H^1(\Omega)$). Furthermore, $\nabla\cdot (j_\electrons+j_\holes)$ also belongs to $L^2(\Omega)$, since the difference between \eqref{eq:model-cont-eq-n-dimless} and \eqref{eq:model-cont-eq-p-dimless} reads $\nabla\cdot (j_\electrons+j_\holes) = 0$. Therefore the normal trace $\nabla\cdot (j_\electrons+j_\holes)\cdot\nu$ is bounded in $H^{-1/2}(\partial \Omega)$ and we have \cite{Girault_Raviart86}
 \[
 \int_{\partial \Omega}(j_\electrons + j_\holes)\cdot \nu \,u\,\mathrm{d}\Gamma\,=\,
 \int_{\Omega}(j_\electrons + j_\holes)\cdot\nabla u\,\mathrm{d}x
 \,,
 \]
 for all $u \in \mathcal{C}^\infty(\bar{\Omega})$. In particular, this guarantees that
 \[
 \int_{\partial \Omega}(j_\electrons + j_\holes)\cdot \nu \,(u_1 - u_2)\,\mathrm{d}\Gamma\,=\,
 \int_{\Omega}(j_\electrons + j_\holes)\cdot\nabla (u_1-u_2)\,\mathrm{d}x
 \,,
 \]
 for all $(u_1,u_2)\in X^2$, where
 \[X\,=\, \{u\in \mathcal{C}^\infty(\bar{\Omega})\,|\,u(x)=1\,,\quad\forall  x\in B\quad \textrm{and}\quad u(x)=0 \,,\quad\forall  x\in B^c\cap \Gamma^D\}\,.\]
 We check that $u_1-u_2\in H^1_D(\Omega)$, which allows us to substitute $j_\electrons+j_\holes$ in the previous right hand side according to \eqref{eq:weak-solution-vnp} with $\alpha\in\{\electrons,\holes\}$, leading to 
 \[
 \int_{\partial \Omega}(j_\electrons + j_\holes)\cdot \nu \,(u_1 - u_2)\,\mathrm{d}\Gamma\,=\,0
 \,.
 \]
 The previous relation justifies that $I_B$ in \eqref{outward:current} is well defined as
 \[
 I_B\,=\,
 \int_{\partial \Omega}(j_\electrons + j_\holes)\cdot \nu \,u\,\mathrm{d}\Gamma
 \,=\,
 \int_{\Omega}(j_\electrons + j_\holes)\cdot\nabla u\,\mathrm{d}x
 \,,
 \]
 for all $u\in X$. Furthermore, taking the absolute value and applying Cauchy-Schwarz inequality in the right hand side, we find
 \[
 |I_B|\,\leq\,(\|j_\electrons\|_{L^2(\Omega)}+ \|j_\holes\|_{L^2(\Omega)})\,\inf_{u\in X}
 \|\nabla u\|_{L^2(\Omega)}
 \,.
 \]
 We conclude the proof by checking that $X$ is nonempty since $\overline{B} \cap (\overline{B^c\cap \Gamma^D})=\emptyset$.

\section{Numerical simulations}\label{sec:Sim}
In this section, we simulate the charge transport in two representative applications to study how the generation rate influences carrier densities and potential profiles.
We employ a stationary version of the time-implicit two-point flux finite volume scheme introduced in \cite{Abdel2023Existence}.
The simulations are performed with the Julia package \href{https://github.com/WIAS-PDELib/ChargeTransport.jl}{\texttt{ChargeTransport.jl}}\cite{ChargeTransport}, which in turn is built on \texttt{VoronoiFVM.jl} \cite{VoronoiFVM}, which implements the Voronoi finite volume method for general nonlinear diffusion-convection-reaction systems.

For both applications, we use the Fermi-Dirac integral of order $1/2$, as stated in \eqref{eq:FD12}, for electrons and holes as statistics function.
We combine radiative and SRH contributions, entering the recombination-generation term in \eqref{eq:recombination-standard},
$
r(n_\electrons, n_\holes) = r_{0, \text{rad}} + (n_\electrons + n_\holes)^{-1},
$
with $r_{0, \text{rad}} = 1$ in the solar cell case (\Cref{sec:Sim-PSC}) and $r_{0, \text{rad}} = 0$ for the laser-beam application (\Cref{sec:Sim-LBIC}).
Both numerical examples are integrated in \cite{ChargeTransport}, where the solar cell simulations are contained in the example file \href{https://github.com/WIAS-PDELib/ChargeTransport.jl/blob/master/examples/Ex109_PSC_NonDimensional.jl}{\texttt{Ex109\_PSC\_NonDimensional.jl}} while the LBIC simulations are conducted with \href{https://github.com/WIAS-PDELib/ChargeTransport.jl/blob/master/examples/Ex203_LBIC_NonDimensional.jl}{\texttt{Ex203\_LBIC\_NonDimensional.jl}}.

\subsection{Three-layer perovskite solar cell} \label{sec:Sim-PSC}
In this example, we simulate the charge transport in a perovskite solar cell in one dimension, consisting of an absorber layer, which captures photons from sunlight, sandwiched between two transport layers, as illustrated in \Cref{fig:PSC-Setup}a.
We partition the domain $\Omega$ into three subdomains $\Omega = \Omega_{\text{ETL}} \cup \Omega_{\text{PVK}} \cup \Omega_{\text{HTL}}$ with $\Omega_{\text{ETL}} = [0, 1]$, $\Omega_{\text{PVK}} = [1, 5]$, and $\Omega_{\text{HTL}} = [5, 7]$.
While we assume in the first setup that $z_\ions = 0$, resulting in a bipolar model, we assume in the second setup $z_\ions = 1$.
In that case, ionic movement is restricted to the middle layer $\Omega_{\text{PVK}}$.
In this one-dimensional set-up, we solely assume outer Dirichlet boundaries.
At the inner interfaces, we assume continuity of potentials as well as their gradients.
With these additional interfacial conditions the above analysis of the model can be extended to multi-layer devices as explained in Remark~\ref{rem:intrinsic}.
Finally, we choose a uniform grid spacing $h \approx 1.26 \times 10^{-2}$, resulting in a total of $558$ nodes.

\begin{figure}[!ht]
    \includegraphics[width=\textwidth]{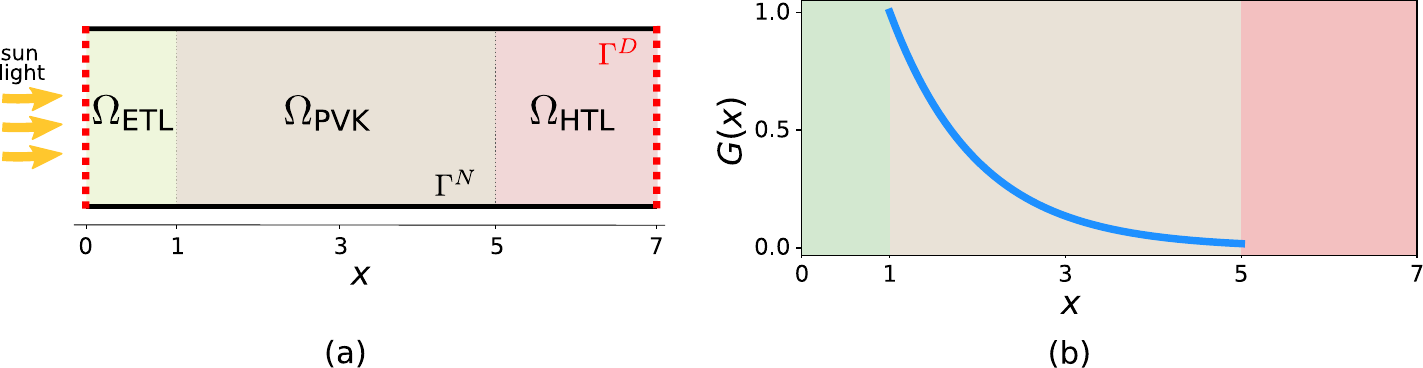}
    \caption{
        (a) Schematic of the three-layer solar cell architecture, with the perovskite layer $\Omega_{\text{PVK}}$ sandwiched between the electron transport layer $\Omega_{\text{ETL}}$ and the hole transport layer $\Omega_{\text{HTL}}$.
        Light enters through the left contact.
        The boundaries $\Gamma^N$ and $\Gamma^D$ are also indicated.
        (b) Exponentially decaying photogeneration rate $G$, present only within the absorbing perovskite layer $\Omega_{\text{PVK}}$.
    }
    \label{fig:PSC-Setup}
\end{figure}

The Debye length is set to $\lambda = 1$.
The doping density is piecewise constant across the three regions with $C = 10$ in $\Omega_{\text{ETL}}$, $C = -10$ in $\Omega_{\text{HTL}}$ and to $C = 0$ in $\Omega_{\text{PVK}}$.
The photogeneration rate, illustrated in \Cref{fig:PSC-Setup}b, is modeled by an exponentially decaying function $G(x) = {\mathbf 1}_{\Omega_{\text{PVK}}} (x) G_0 \exp(-(x-x_1)) $, where $x_1 = 1$ marks the left edge of the absorber layer and $G_0$ is a scaling factor.
Photogeneration is confined to $\Omega_{\text{PVK}}$ with a typical value $G_0 = 1$, corresponding to standard solar illumination conditions, i.e., a clear sunny day.
Moreover, we impose Dirichlet boundary conditions at the contacts:
$v_\electrons^D|_{x = 0} = v_\holes^D|_{x = 0} = 0$, $\psi^D|_{x = 0} = \psi_0$ at the left contact and $-v_\electrons^D|_{x = 7} = v_\holes^D|_{x = 7} = \overline{V}$, $\psi^D|_{x = 7} = \psi_0 + \overline{V}$ at the right contact,
where $\overline{V}$ is the applied voltage and $\psi_0$ the equilibrium solution of the Poisson equation without left-hand side.
This equilibrium solution also serves as the initial guess for the nonlinear solver.
In the following, we examine the impact of the photogeneration rate on charge transport by varying the generation prefactor $G_0$.

\noindent
\textbf{Two-species model.}

\noindent
First, we consider the case of the two species model, i.e., $z_\ions = 0$, with an applied voltage of $\overline{V} = 2$.
\Cref{fig:PSC-Potential-V-2p0} shows the potential profiles for different $G_0$, where brighter colors indicate larger $G_0$ and arrows mark the direction of increase.
\Cref{fig:PSC-Density-V-2p0} displays the corresponding electron and hole densities, along with the $L^\infty$ norms of carrier densities and potentials as functions of $G_0$.

\begin{figure}[!ht]
    \includegraphics[width=\textwidth]{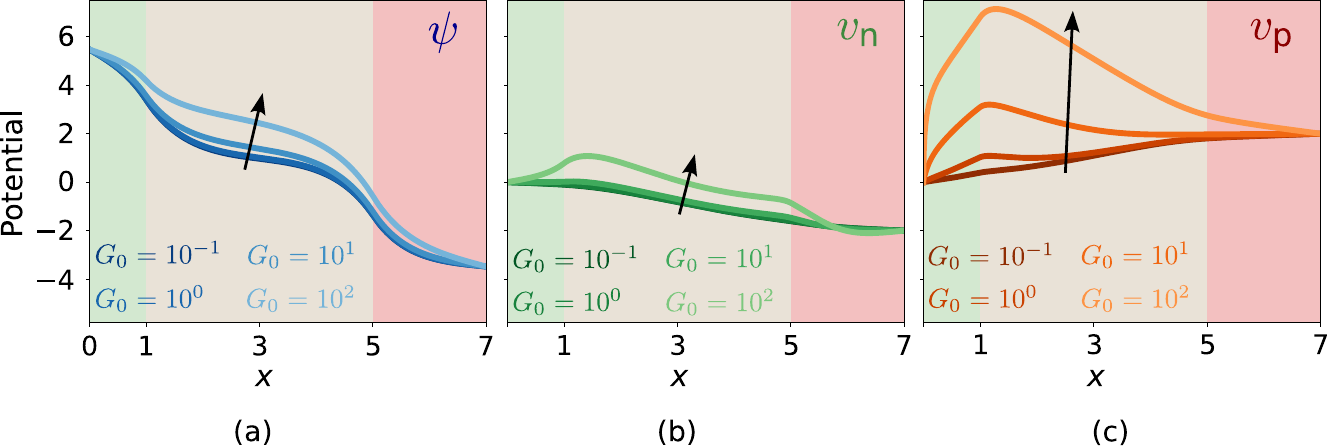}
    \caption{
        (a) Electric potential $\psi$ (blue), and quasi-Fermi potentials of (b) electrons (green) and (c) holes (red), shown for different values of the photogeneration prefactor $G_0$.
        Brighter colors correspond to larger values of $G_0$ with arrows indicating the direction of increasing $G_0$.
        The externally applied voltage, which enters through the Dirichlet boundary conditions, is set to $\overline{V} = 2$.
    }
    \label{fig:PSC-Potential-V-2p0}
\end{figure}

The strongest impact of increasing $G_0$ on the potential profiles appears in the hole potential $v_\text{p}$ (\Cref{fig:PSC-Potential-V-2p0}c).
For increasing $G_0$, $v_\text{p}$ rises sharply at the interface between the electron transport layer (green shaded) and the perovskite layer (gray shaded).
The electron quasi-Fermi potential $v_\text{n}$ (\Cref{fig:PSC-Potential-V-2p0}b) and the electric potential $\psi$ (\Cref{fig:PSC-Potential-V-2p0}a) are also influenced, though markedly only for larger prefactors such as $G_0 = 10^2$.

Since both transport layers have the same absolute doping, the electron $n_\electrons$ and hole $n_\holes$ densities in \Cref{fig:PSC-Density-V-2p0} are uniformly distributed along the transport direction for small $G_0$.
The carrier densities grow in the intrinsic perovskite layer with increasing $G_0$.
However, the hole density increases slightly more than the electron density, consistent with the distribution of the potential profiles.

\begin{figure}[!ht]
    \includegraphics[width=\textwidth]{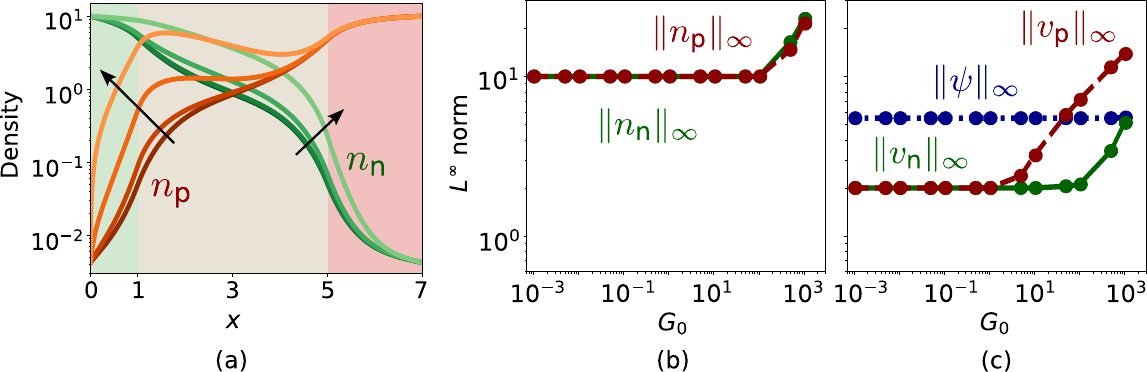}
    \caption{
        (a) Electron density (green) and hole density (red) shown for different values of the photogeneration prefactor $G_0$.
        Brighter colors correspond to larger values of $G_0$ with arrows indicating the direction of increasing $G_0$.
        (b) $L^{\infty}$ norms of the carrier densities of electrons (green), and holes (red dashed) as well as
        (c) $L^{\infty}$ norms of the quasi-Fermi potentials of electrons (green) and holes (red dashed), and the electric potential (blue dotted), plotted as a function of the photogeneration prefactor $G_{0}$.
        The externally applied voltage, which enters through the Dirichlet boundary conditions, is set to $\overline{V} = 2$.
    }
    \label{fig:PSC-Density-V-2p0}
\end{figure}

Examining the $L^{\infty}$ norms in \Cref{fig:PSC-Density-V-2p0}b and \Cref{fig:PSC-Density-V-2p0}c, we find that for smaller choices of the photogeneration prefactor $G_0 < 10^{1}$ that the potentials and carrier densities remain bounded relative to their Dirichlet values.
This observation is consistent with physical expectations.
For larger $G_0$, the $L^{\infty}$ norms of the quasi-Fermi potentials and carrier densities increase, showing a strong impact of the photogeneration within the perovskite layer.
Nevertheless, even for unrealistically high $G_0 \gg 10^{1}$, all quantities remain bounded.

\noindent
\textbf{Three species model.}

\noindent

\begin{figure}[!ht]
    \includegraphics[width=\textwidth]{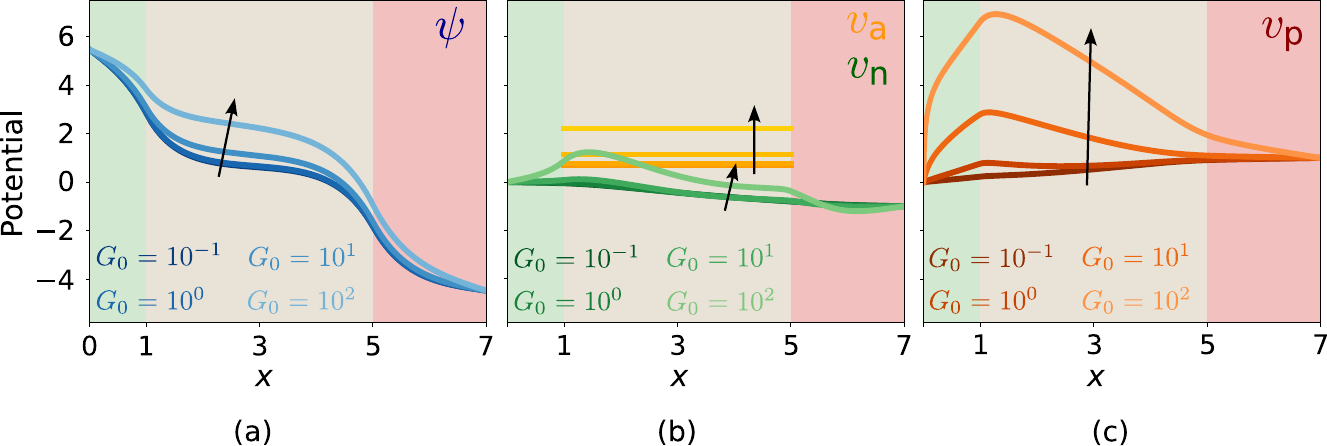}
    \caption{
        (a) Electric potential $\psi$, and quasi-Fermi potentials of (b) electrons $v_{\text{n}}$ (green), vacancies $v_\ions$ (gold) and (c) holes $v_{\text{p}}$, shown for different values of the photogeneration prefactor $G_0$.
        Brighter colors correspond to larger values of $G_0$ with arrows indicating the direction of increasing $G_0$.
        The applied voltage is set to $\overline{V} = 1$.
    }
    \label{fig:PSC-Potential-ions}
\end{figure}

Next, we recover the full model \eqref{eq:model-dimless} with $z_\ions = 1$.
We choose the Blakemore statistics, stated in \eqref{eq:Blakemore}, with the saturation density $S_\ions = 10$ as the statistics function.
As mentioned earlier, the vacancy potential $v_\ions$ is constant and determined by the integral equation in \eqref{eq:model-equilibrium ions}.
We set $M_\ions = | \Omega_{\text{PVK}} | C_{\text{a}}$, with $C = - C_\ions = -7.5$ in the perovskite layer $\Omega_{\text{PVK}}$ for this example, which enforces global charge neutrality, as expected from measurements \cite{courtier2018fast, Abdel2021Model}.

As before, \Cref{fig:PSC-Potential-ions} shows now the potential profiles for $\overline{V} = 1$ and varying $G_0$, with brighter colors indicating larger $G_0$ and arrows marking the direction of increase.
Lastly, \Cref{fig:PSC-Density-ions} presents the corresponding carrier densities and the $L^\infty$ norms of carrier densities and potentials as functions of $G_0$.
In addition, the figures now include the vacancy quasi-Fermi potential and the associated ionic vacancy density.

\begin{figure}[!ht]
    \includegraphics[width=\textwidth]{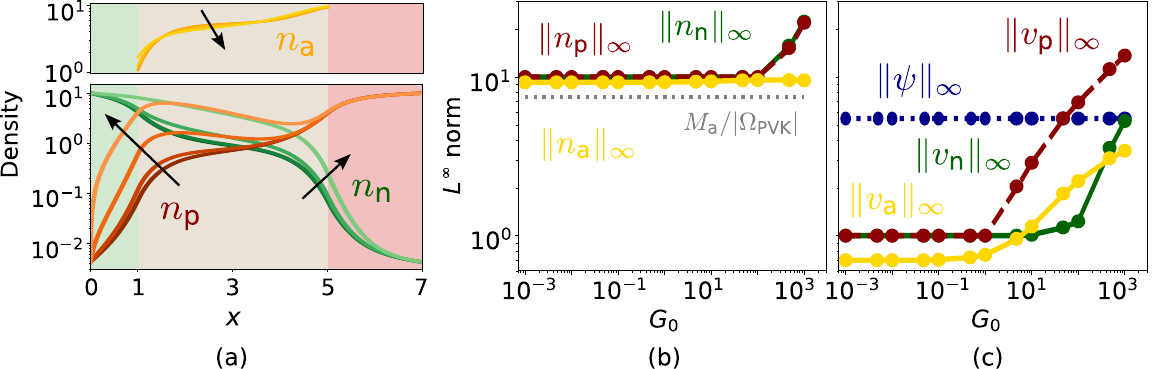}
    \caption{
        (a) Electron density (green), hole density (red), and vacancy density (gold) shown for different values of the photogeneration prefactor $G_0$.
        Brighter colors correspond to larger values of $G_0$ with arrows indicating the direction of increasing $G_0$.
        (b) $L^{\infty}$ norms of the carrier densities of electrons (green), holes (red dashed), and ions (gold) as well as
        (c) $L^{\infty}$ norms of the quasi-Fermi potentials of electrons (green), holes (red dashed), ions (gold) and the electric potential (blue dotted), plotted as a function of the photogeneration prefactor $G_{0}$.
        In (b) we see also the scaled total mass $M_\ions/| \Omega_{\text{PVK}}|$, which is the same for all $G_0$.
        The applied voltage is set to $\overline{V} = 1$.
    }
    \label{fig:PSC-Density-ions}
\end{figure}

The electron and hole potentials and densities remain nearly unchanged compared to the previous case, with only moderate variations for small $G_0$ and the strongest effect on the hole quasi-Fermi potential.
The additional ions flatten the electric potential in the intrinsic layer slightly, consistent with physical expectations.
The weak influence of $G_0$ on the ionic vacancy density is also reasonable, since photogeneration affects only electrons and holes, with vacancies coupled to them only indirectly through the electric potential.

Additional vacancies do not noticeably affect the $L^\infty$ norms.
For all relevant values of $G_0$, the potentials and carrier densities remain bounded by their respective Dirichlet boundary values.
The ionic vacancy density itself is constrained by construction via the state equation through the assumptions in \eqref{hyp:statistics-a} and, therefore, cannot exceed the saturation density $S_\ions = 10$.
These observations confirm that the extended model remains stable and consistent with the expected physical behavior.

\subsection{Laser-beam-induced-current technique} \label{sec:Sim-LBIC}

In the final example, we simulate the laser-beam-induced-current (LBIC) technique, a method used to detect inhomogeneities in classical semiconductors.
In this application, it is common to assume a bipolar charge transport model, i.e., $z_\ions = 0$.
The computational domain is divided into two doped regions, $\overline{\Omega} = \overline{\Omega}_{\text{n}} \cup \overline{\Omega}_{\text{p}}$, with $\Omega = (0,8) \times (0,4)$ and $\Omega_{\text{p}} = (2,6) \times (1,3)$, as shown in \Cref{fig:LBIC-Setup}a.
For the discretization, we employ a structured grid with $1219$ nodes, where the control volume sizes range between $5.0 \times 10^{-3}$ and $2.8 \times 10^{-2}$.
The boundary of the domain consists of a Dirichlet part $\Gamma^D=\Gamma^D_1\cup \Gamma^D_2$, with $\Gamma^D_1=\{0\}\times(0,4)$ and $\Gamma^D_2=\{8\}\times(0,4)$,  and a Neumann part $\Gamma^N$, where homogeneous boundary conditions are imposed.

The LBIC technique measures the outgoing current $I_{\Gamma^D_2}$  given by \eqref{outward:current} through the device's right ohmic contact $\Gamma^D_2$ (and well defined according to Corollary \ref{coro:bound:current}), while a laser beam is applied at a given position $(x_0, y_0) \in \overline{\Omega}$.
Repeating this procedure for multiple beam positions yields the LBIC signal.
The generation rate, induced by the laser, can be described by a Gaussian profile
\begin{align*}
G(x, y) = G_0 \exp \left( - \frac{(x-x_0)^2 + (y-y_0)^2}{2 \sigma^2}\right),
\end{align*}
with the given beam center $(x_0, y_0)$, amplitude $G_0$, and a fixed beam width $\sigma = 0.5$.
A visualization of the generation profile at $(x_0, y_0) = (4, 2)$ is provided in \Cref{fig:LBIC-Setup}b.

\begin{figure}[!ht]
    \includegraphics[width=\textwidth]{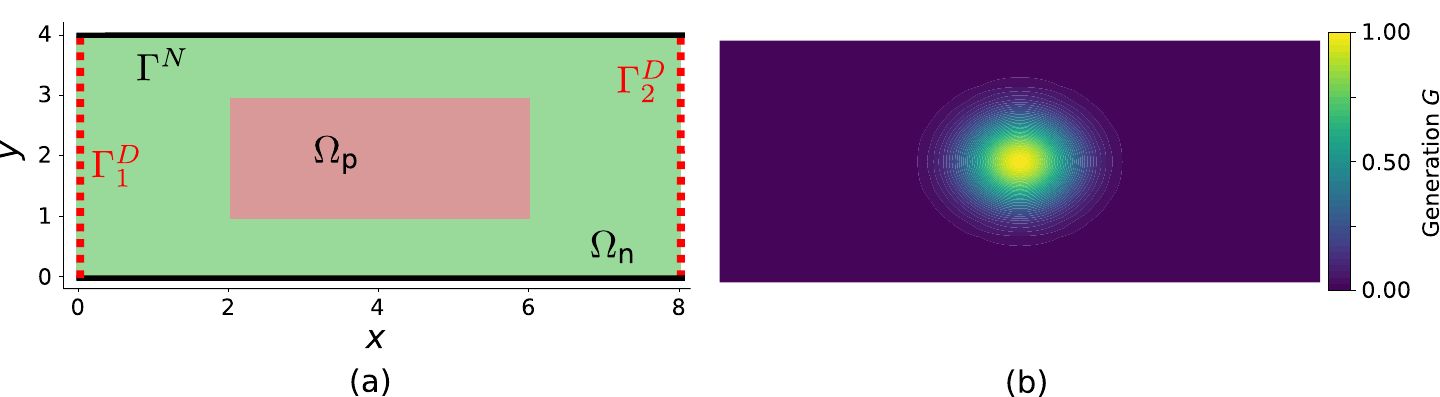}
    \caption{
        (a) Schematic of the p–n device, consisting of a p-doped material layer (red) embedded within an n-doped material layer (green).
        (b) Spatial distribution of the optical generation rate $G$, modeling a focused laser beam centered at $(x_0, y_0) = (4, 2)$, with parameters $G_0 = 1$ and $\sigma = 0.5$.
    }
    \label{fig:LBIC-Setup}
\end{figure}

\begin{figure}[!ht]
    \includegraphics[width=\textwidth]{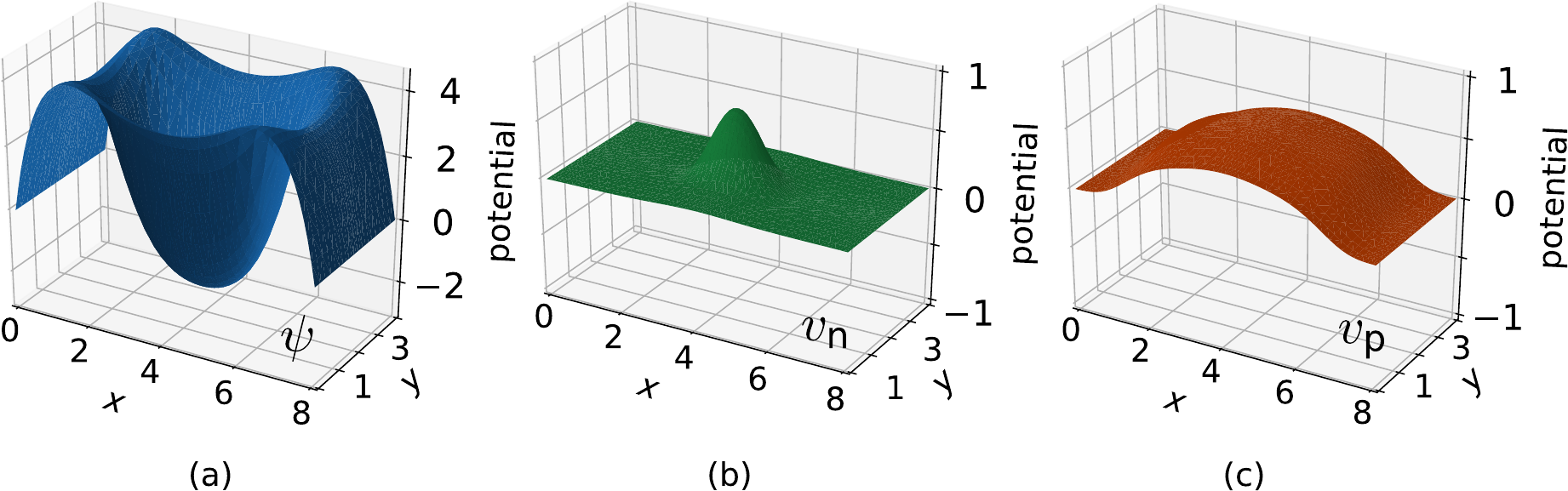}
    \caption{
        Three-dimensional surface plots of (a) the electrostatic potential $\psi$, (b) the electron quasi-Fermi potential $v_{\text{n}}$, and (c) the hole quasi-Fermi potential $v_{\text{p}}$ under illumination by a laser beam positioned at the center of the domain, i.e., at $(x_0, y_0) = (4, 2)$.
        The calculations correspond to parameters $G_0 = 1$, $\lambda = 1$, and $C_{\text{n}} = - C_{\text{p}} = 10$.
    }
    \label{fig:LBIC-Potentials}
\end{figure}

\Cref{fig:LBIC-Potentials} shows the potentials and \Cref{fig:LBIC-Density} the carrier densities for a Debye length $\lambda = 1.0$, a laser amplitude $G_0 = 1$, doping values $C = C_{\text{n}} = 10$ in $\Omega_\text{n}$, $C = -C_{\text{p}} = -10$ in $\Omega_\text{p}$, and a focused laser beam centered at $(x_0, y_0) = (4,2)$.
As the laser beam is positioned in the domain center, the potentials are mostly impacted in that area.
The carrier densities in \Cref{fig:LBIC-Density} also follow expectations: we have a higher hole density in $\Omega_\holes$ and a higher electron density in $\Omega_\electrons$, related to the choices of doping in that layers.

\begin{figure}[!ht]
    \includegraphics[width=\textwidth]{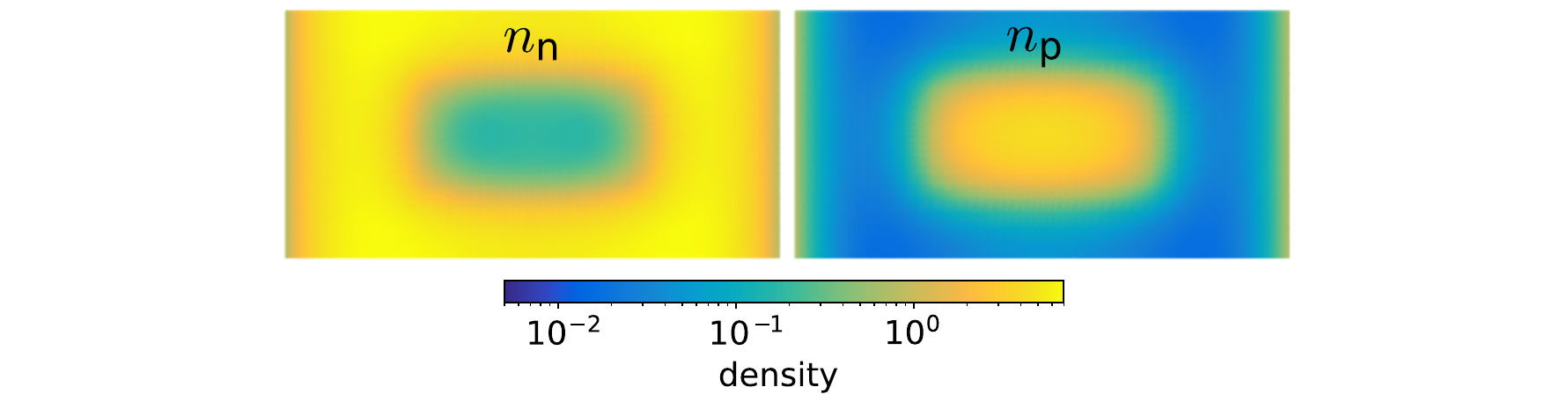}
    \caption{
        Contour plots of the electron $n_{\text{n}}$ and hole densities $n_{\text{p}}$ under illumination by a laser beam positioned at the center of the domain, i.e., at $(x_0, y_0) = (4, 2)$.
        The calculations correspond to parameters $G_0 = 1$, $\lambda = 1$, and $C_{\text{n}} = - C_{\text{p}} = 10$.
    }
    \label{fig:LBIC-Density}
\end{figure}

Next, we investigate how the total current changes in dependence of the laser beam position.
\Cref{fig:LBIC-Current}a shows the simulated two-dimensional LBIC signal.
The laser beam center $(x_0, y_0)$ was varied across all mesh points, and the total current was computed for each laser position.
The resulting signal behaves as expected, with a minimum just left of the p–n interface and a maximum to the right, reflecting the characteristic junction response to the focused laser beam.

Finally, we investigate how the Debye length $\lambda$, the doping magnitude $C$, and the beam amplitude $G_0$ influence the LBIC signal.
We fixed $y_0 = 2$ and computed the total  current for varying laser positions along the $x$-direction (following the red plane in \Cref{fig:LBIC-Current}a).

\begin{figure}[!ht]
    \includegraphics[width=\textwidth]{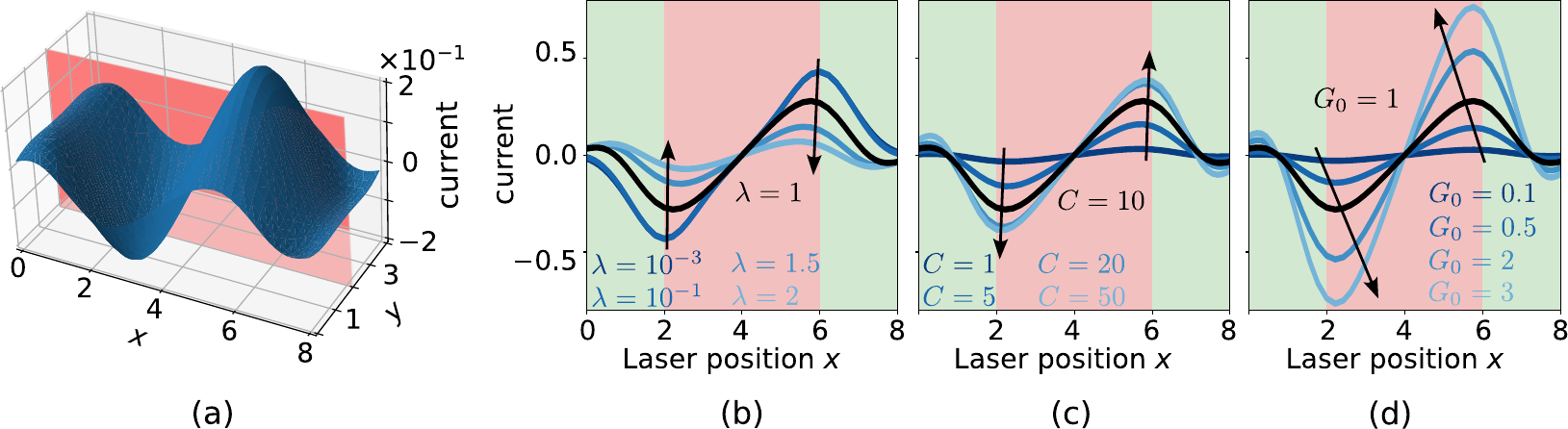}
    \caption{
        (a) Surface plot of the LBIC signal, represented by the total current, obtained by scanning the laser beam across the grid, that is, by sequentially repositioning the beam to different locations $(x_0, y_0)$ and computing the resulting current at each position.
        The calculations correspond to parameters $G_0 = 1$, $\lambda = 1$, and $C_{\text{n}} = - C_{\text{p}} = 10$.
        Furthermore, we show the one-dimensional cross-sections of the LBIC signal at $y_0 = 2$, i.e., along the red plane in (a), where the laser position is varied along the $x$-direction.
        For readability, we use the notation $C = C_\electrons = -C_\holes$.
        The signal is shown for:
        (b) $G_0 = 1$, $C = 10$ and varying $\lambda$,
        (c) fixed $G_0 = 1$, $\lambda = 1$ and varying $C$, and
        (d) fixed $\lambda = 1$, fixed $C = 10$ and varying $G_0$.
        Brighter colors correspond to larger values of the varying parameter, with arrows indicating the direction of increasing parameter values.
    }
    \label{fig:LBIC-Current}
\end{figure}

Non-destructive techniques such as the LBIC method generally correspond to inverse problems, in which the measured current signal is known, while the underlying material parameters, for instance, the doping profile, remain to be determined \cite{piani2024data}.
In such applications, the current signal is measured at different laser positions, and the objective is to identify the material parameters that reproduce this signal.
In the following, we address the forward problem and analyze how variations in key input parameters affect the total current.
\Cref{fig:LBIC-Current}b–d show one-dimensional cross-sections of the LBIC signal for different parameter values.
Brighter colors indicate larger values, with arrows showing the direction of increase, while the black curve corresponds to the one-dimensional signal from \Cref{fig:LBIC-Current}a.
Smaller Debye lengths lead to larger currents because they strengthen the internal electric field.
Increasing the doping raises the current by adding more carriers to the semiconductor, while a higher beam amplitude produces more photocarriers, also boosting the current.
In all cases, the overall shape of the current signal is bounded and preserved, independent of the absolute magnitude of these parameters.

\appendix
\section{Stampacchia's lemma}
For completeness we give a short proof of a slightly improved version of \cite[Lemma 4.1]{Stampacchia_1965}.
\begin{lemma}\label{lem:stamp}
Let $\mathcal{E}$ be a non-increasing and non-negative function satisfying, for some $x_0\in\mathbb{R}$, $\alpha, \zeta>0$ and $\beta>1$,
\[
\mathcal{E}(y)\leq \zeta\frac{\mathcal{E}(x)^\beta}{(y-x)^\alpha}\,, \quad \forall\, y>x\geq x_0.
\] 
Then  \[
\mathcal{E}(x)=0\,,\quad \forall\ x\geq x_0 + \zeta^{\frac1\alpha}\frac{\beta^{\frac{\beta}{\beta-1}}}{\beta-1}\mathcal{E}(x_0)^{\frac{\beta-1}{\alpha}}.\]
\end{lemma}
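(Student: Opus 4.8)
The plan is to define a recursively increasing sequence of thresholds $x_k$ and show that $\mathcal{E}(x_k)$ decays geometrically to zero, with the $x_k$ converging to the stated bound. First I would introduce, for a parameter $D>0$ to be fixed later, the sequence
\[
x_k = x_0 + D\left(1 - \beta^{-k}\right)\frac{\beta}{\beta-1}, \qquad k\geq 0,
\]
so that $x_0$ is recovered at $k=0$ and the increments satisfy $x_{k+1}-x_k = D\,\beta^{-k}$. These increments form a geometric series summing to $D\beta/(\beta-1)$, so $x_k \nearrow x_\infty := x_0 + D\beta/(\beta-1)$. The strategy is the classical De Giorgi iteration: feed the pair $(x_k, x_{k+1})$ into the hypothesis and use the geometric decay of the increments to control $\mathcal{E}(x_{k+1})$ in terms of $\mathcal{E}(x_k)^\beta$.

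The key step is an inductive claim that $\mathcal{E}(x_k)\leq \mathcal{E}(x_0)\,\beta^{-k/(\beta-1)}$ for all $k\geq 0$. Assuming this holds at level $k$, the hypothesis applied with $y=x_{k+1}$ and $x=x_k$ gives
\[
\mathcal{E}(x_{k+1}) \leq \zeta \frac{\mathcal{E}(x_k)^\beta}{(x_{k+1}-x_k)^\alpha} = \frac{\zeta}{D^\alpha}\,\beta^{k\alpha}\,\mathcal{E}(x_k)^\beta.
\]
Inserting the inductive bound $\mathcal{E}(x_k)^\beta \leq \mathcal{E}(x_0)^\beta\,\beta^{-k\beta/(\beta-1)}$ and collecting the powers of $\beta$, the exponent becomes $k\alpha - k\beta/(\beta-1)$, which I want to match against the target exponent $-(k+1)/(\beta-1)$ at level $k+1$. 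The cleanest route is to choose $D$ precisely so that the constant prefactor reduces the problem to a pure power of $\beta$; the natural choice is
\[
D = \zeta^{\frac1\alpha}\,\beta^{\frac{1}{\alpha}\cdot\frac{\beta}{\beta-1}}\,\mathcal{E}(x_0)^{\frac{\beta-1}{\alpha}},
\]
which makes $\zeta D^{-\alpha}\mathcal{E}(x_0)^{\beta-1} = \beta^{-\beta/(\beta-1)}$, and one checks that the remaining powers of $\beta$ close the induction exactly. With this $D$ the induction propagates and $\mathcal{E}(x_k)\to 0$ as $k\to\infty$.

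To conclude, I would use monotonicity of $\mathcal{E}$: since $\mathcal{E}$ is non-increasing and $x_k \nearrow x_\infty$ with $\mathcal{E}(x_k)\to 0$, it follows that $\mathcal{E}(x_\infty)=0$, and hence $\mathcal{E}(x)=0$ for every $x\geq x_\infty$. Substituting the chosen $D$ into $x_\infty = x_0 + D\beta/(\beta-1)$ gives the threshold
\[
x_0 + \zeta^{\frac1\alpha}\frac{\beta^{\frac{\beta}{\beta-1}}}{\beta-1}\,\mathcal{E}(x_0)^{\frac{\beta-1}{\alpha}},
\]
where the exponent $\frac1\alpha\cdot\frac{\beta}{\beta-1}+1 = \frac{\beta}{\beta-1}$ on $\beta$ arises from combining the $\beta^{1/\alpha\cdot\beta/(\beta-1)}$ inside $D$ with the factor $\beta$ from the geometric sum (after the $\alpha$-th root is taken, the former contributes $\beta^{\beta/(\beta-1)}$). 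I expect the main obstacle to be purely bookkeeping: pinning down the constant $D$ so that the geometric decay rate $\beta^{-1/(\beta-1)}$ is achieved with equality, and then verifying that the accumulated powers of $\beta$ in $x_\infty$ collapse to the clean exponent $\beta/(\beta-1)$ stated in the lemma. No analytic difficulty arises beyond this careful matching of exponents.
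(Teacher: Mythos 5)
Your overall strategy---a De Giorgi iteration with increments $x_{k+1}-x_k = D\beta^{-k}$ followed by summing the geometric series and using monotonicity at the limit---is structurally the same as the paper's proof (the paper runs the identical iteration with a free geometric rate $\rho$ and minimizes over $\rho$ at the end; the minimizer corresponds precisely to increments with ratio $\beta^{-1}$). However, your inductive claim carries the wrong exponent, and this is a genuine gap rather than bookkeeping. With increments $\delta_k = D\beta^{-k}$, the hypothesis gives $\mathcal{E}(x_{k+1}) \le \zeta D^{-\alpha}\beta^{k\alpha}\mathcal{E}(x_k)^\beta$, and inserting your claim $\mathcal{E}(x_k)\le\mathcal{E}(x_0)\beta^{-k/(\beta-1)}$, the condition required to pass to level $k+1$ is
\[
\zeta D^{-\alpha}\mathcal{E}(x_0)^{\beta-1}\,\beta^{\,k(\alpha-1)+\frac{1}{\beta-1}}\;\le\; 1 .
\]
With your choice of $D$, which makes $\zeta D^{-\alpha}\mathcal{E}(x_0)^{\beta-1}=\beta^{-\beta/(\beta-1)}$, this reads $k(\alpha-1)\le 1$. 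It holds for all $k$ only when $\alpha\le 1$ and fails at finite $k$ whenever $\alpha>1$; no choice of $D$ can repair it, since the factor $\beta^{k(\alpha-1)}$ diverges. This is fatal for the intended use: in the paper the lemma is applied in the proof of the upper bounds on the densities with $\alpha = 2r/q$ and $r>q$, hence $\alpha>2$. Moreover, even granting the induction, your final arithmetic does not produce the statement: $D\beta/(\beta-1)$ with your $D$ carries the exponent $\frac{1}{\alpha}\frac{\beta}{\beta-1}+1$ on $\beta$, and the identity $\frac{1}{\alpha}\frac{\beta}{\beta-1}+1=\frac{\beta}{\beta-1}$ that you invoke is false unless $\alpha=\beta$.

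The repair is that the decay rate of $\mathcal{E}$ must scale with $\alpha$: the increment ratio $\theta=\beta^{-1}$ and the energy ratio $r$ must satisfy $r^{\beta-1}=\theta^{\alpha}$, i.e. you should claim $\mathcal{E}(x_k)\le\mathcal{E}(x_0)\beta^{-k\alpha/(\beta-1)}$ and take $D = \zeta^{1/\alpha}\beta^{1/(\beta-1)}\mathcal{E}(x_0)^{(\beta-1)/\alpha}$ (assuming $\mathcal{E}(x_0)>0$; otherwise the conclusion is immediate from the hypothesis). Then $\zeta D^{-\alpha}\mathcal{E}(x_0)^{\beta-1}=\beta^{-\alpha/(\beta-1)}$ and the induction closes with exact matching of exponents:
\[
\mathcal{E}(x_{k+1})\le \zeta D^{-\alpha}\beta^{k\alpha}\mathcal{E}(x_0)^{\beta}\beta^{-\frac{k\alpha\beta}{\beta-1}}
= \mathcal{E}(x_0)\,\beta^{-\frac{(k+1)\alpha}{\beta-1}},
\]
while the limit becomes
\[
x_\infty = x_0 + D\frac{\beta}{\beta-1} = x_0 + \zeta^{\frac1\alpha}\frac{\beta^{\frac{\beta}{\beta-1}}}{\beta-1}\,\mathcal{E}(x_0)^{\frac{\beta-1}{\alpha}},
\]
which is exactly the stated threshold. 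With this correction your argument coincides with the paper's proof specialized to the optimal rate $\rho=\beta^{-\alpha/(\beta-1)}$; the paper's version keeps $\rho$ free, obtains $z(\rho)^\alpha=\zeta'\rho^{-1}(1-\rho^{(\beta-1)/\alpha})^{-\alpha}$, and recovers the same constant by minimizing in $\rho$.
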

\begin{proof}
If $\mathcal{E}(x_0)=0$ there is nothing to prove. Otherwise, let $\varphi(x) = \mathcal{E}(x)/\mathcal{E}(x_0)$ and observe that it satisfies $\varphi(y)\leq \zeta'\varphi(x)^\beta(y-x)^{-\alpha}$ for $y>x\geq x_0$, with $\zeta' = \zeta\mathcal{E}(x_0)^{\beta-1}$. From there let $\rho\in(0,1)$, and define $x_{k+1} = x_k + (\zeta')^{\frac1\alpha}\rho^{\frac{(\beta-1)k-1}{\alpha}}$ for all $k\in\mathbb{N}$. By induction $0\leq\varphi(x_k)\leq\rho^k$ so letting $k\to\infty$, $\lim x_k = x_0+z(\rho)$ with $z(\rho)^\alpha = \zeta'\rho^{-1}(1-\rho^{(\beta-1)/\alpha})^{-\alpha}$ and yields $\varphi(x_0+z(\rho)) = 0$. Minimizing in $\rho$ yields the result. 
\end{proof}

\paragraph{Funding.}
AB, CCH and MH acknowledge the support of the CDP C2EMPI, together with the French State under the France-2030 programme, the University of Lille, the Initiative of Excellence of the University of Lille, the European Metropolis of Lille for their funding and support of the R-CDP-24-004-C2EMPI project.
DA was supported by the Deutsche Forschungsgemeinschaft (DFG, German Research Foundation) under Germany's Excellence Strategy -- The Berlin Mathematics Research Center MATH+ (EXC-2046/1, project ID: 390685689).
The research was conducted within the context of the Inria - WIAS Berlin ARISE Associate Team.

\bibliographystyle{abbrv}
\bibliography{lit}

\end{document}